\documentclass[a4paper]{article}
\usepackage[T1]{fontenc} 
\usepackage[french,british]{babel}
\usepackage{csquotes}
\usepackage{biblatex} 
\usepackage{amsmath, amssymb, amsthm, enumerate}
\allowdisplaybreaks
\usepackage{xcolor} 
\usepackage{todonotes}
\usepackage[a4paper, margin=3cm]{geometry}
\usepackage{xspace}
\usepackage{tikz}
\usepackage{mathtools}
\usepackage{mathtools}
\usepackage{esint}

\usepackage{enumitem}
\usepackage{stackengine}

\providecommand{\keywords}[1]
{
  \small	
  \textbf{{Keywords:}} #1
}
\providecommand{\motscles}[1]
{
  \small	
  \textbf{{Mots-cl\'es:}} #1
}
\providecommand{\ams}[1]
{
  \small	
  \textbf{{AMS 2010 classification numbers:}} #1
}

\DeclareMathOperator{\supp}{supp}
\DeclareMathOperator{\spann}{span}

\newcommand\preceqdot{\mathrel{\ooalign{$\prec$\cr
  \hidewidth\raise0ex\hbox{$\cdot\mkern-0.5mu$}\cr}}}

\newcommand{\R}{{\bf R}}
\newcommand{\C}{{\bf C}}
\newcommand{\Z}{{\bf Z}}

\newtheorem{thm}{Theorem}[section]
\newtheorem{lem}[thm]{Lemma}

\newtheorem{proposition}[thm]{Proposition}
{\theoremstyle{definition}
\newtheorem{definition}[thm]{Definition}

\newtheorem{remark}[thm]{Remark}
}
\title{Sparse Bounds for Rough Fourier Integral Operators}

\author{Wellars Banzi \and Froduald Minani \and Solange Mukeshimana \and David Rule}

\date{10th August 2026}

\newcommand{\Addresses}{{
  \bigskip
  \footnotesize

  W.~Banzi, \textsc{College of Science and Technology, University of Rwanda, P.O.~Box: 3900, Kigali, Rwanda}\par\nopagebreak
  \textit{E-mail address}: \texttt{webanzi@gmail.com}

  \medskip

  F.~Minani, \textsc{College of Science and Technology, University of Rwanda, P.O.~Box: 3900, Kigali, Rwanda}\par\nopagebreak
  \textit{E-mail address}: \texttt{froduald.minani@gmail.com}

  \medskip

  S.~Mukeshimana, \textsc{College of Science and Technology, University of Rwanda, P.O.~Box: 3900, Kigali, Rwanda}\par\nopagebreak
  \textit{E-mail address}: \texttt{sosmukish@gmail.com}

  \medskip

  D.~Rule, \textsc{Department of Mathematics, Link\"oping University, SE-581 83 Link\"oping, Sweden}\par\nopagebreak
  \textit{E-mail address}: \texttt{david.rule@liu.se}

}}

\begin{document}

\maketitle

\begin{abstract}
We prove pointwise bounds for rough Fourier integral operators by the $L^p$ Hardy-Littlewood maximal function. We assume the Fourier integral operators have amplitudes in $L^\infty S^m_\rho$ and phases $\varphi$ such that $\varphi(x,\xi) - x\cdot\xi \in L^\infty \Phi^1$, and assume a non-degeneracy condition on the matrix $\partial^2_\xi\varphi(x,\xi)$. The pointwise bound holds when
\begin{equation*}
    m < -\frac{\rho}{2}(n-1) - \frac{\rho}{p} - \frac{n}{p}(1-\rho),
\end{equation*}
which is known to be a sharp condition on $m$ when $\rho=1$, modulo the end-point. Making use of this pointwise bound and known $L^p$ boundedness results when the phase satisfies an additional non-degeneracy condition, we go on to prove sparse form bounds.
\end{abstract}

\keywords{Fourier integral operator, Hardy-Littlewood maximal function, sparse domination, weighted norm inequality.}

\ams{35S30, 42B20, 42B25, 45P05, 42B37.}

{\selectlanguage{french}
\begin{abstract}
\emph{Bornes creuses pour les opérateurs intégraux de Fourier rugueux} --- Nous prouvons des bornes ponctuelles pour des opérateurs intégraux de Fourier  rugueux  par la fonction maximale de Hardy-Littlewood $L^p$. Nous supposons que ces opérateurs ont des amplitudes dans $L^\infty S^m_\rho$ et des phases $\varphi$ telles que $\varphi(x,\xi) - x\cdot\xi \in L^\infty \Phi^1$. Nous supposons également une condition de non-dégénérescence sur la matrice $\partial^2_\xi\varphi(x,\xi)$. La borne ponctuelle est valable lorsque
\begin{equation*}
m < -\frac{\rho}{2}(n-1) - \frac{\rho}{p} - \frac{n}{p}(1-\rho),
\end{equation*}
ce qui constitue une condition optimale sur $m$ lorsque $\rho=1$, modulo point terminal. En utilisant cette  borne  ponctuelle ainsi que des résultats connus de bornitude $L^p$ valables lorsque la phase satisfait une condition supplémentaire de non-dégénérescence, nous démontrons ensuite des  bornes  par des formes creuses.
\end{abstract}

\motscles{Opérateur intégral de Fourier, Fonction maximale de Hardy-Littlewood, Domination creuse, Inégalité de norme à poids.}
}

\section{Introduction}
In this short note we prove sparse bounds and pointwise bounds by the Hardy-Littlewood maximal function for Fourier integral operators with similar minimal smoothness assumptions to those described by Dos Santos Ferriera \& Staubach~\cite{DSFS}. A Fourier integral operator $T^{\varphi}_{a}$ is an operator which, when acting on a function $f\colon \R^n \to \C$, can be written as
\begin{equation}\label{eq:fio}
T^{\varphi}_{a}f(x)=\frac{1}{(2\pi)^n}\int_{\R^n}a(x,\xi)e^{i\varphi(x,\xi)}\hat{f}(\xi)d\xi,
\end{equation}
where the function $a(x,\xi)$ is called the \emph{amplitude} and $\varphi$ the \emph{phase function}, which is assumed to be positively homogeneous of degree one. The study of these operators goes back to the 1970s and began with the work of H\"ormander~\cite{hormander1971fourier} and Eskin~\cite{eskin1970degenerate}. Such operators appear naturally in the study of hyperbolic partial differential equations and optimal $L^p$-bounds for smooth phases and amplitudes were obtained in the seminal paper of Seeger, Sogge \& Stein~\cite{SSS}. 

To the best of our knowledge, the first weighted boundedness results for Fourier integral operators~--- where in this instance the weights were power weights~--- were proved by Ruzahansky \& Sugimoto~\cite{RuzhanskySugimoto}. Dos Santos Ferreira \& Staubach~\cite{DSFS} went on to prove a variety of weighted boundedness results for Muckenhoupt weights for both smooth and rough Fourier integral operators. It will essentially be their work which is our starting point here. In particular, they build on the work of Kenig \& Staubach~\cite{KenigStaubach} in considering the following rough symbol/amplitude classes, which are rough versions of H\"ormander's $S^m_{\delta,\rho}$ classes.

\begin{definition}\label{def:amplitude}
    Let $m,\rho \in \R$. A function $a(x,\xi)$ which is smooth in the $\xi$-variable and measurable in the $x$-variable belongs to the class $L^\infty S^m_\rho$, if, for each multi-index $\alpha$, there exists a $C_\alpha$ such that
    \begin{equation*}
        \sup_{\xi\in\R^n} (1+|\xi|^2)^{(-m+\rho|\alpha|)/2} \|\partial^\alpha_\xi a(\cdot,\xi)\|_{L^\infty} \leq C_\alpha < \infty.
    \end{equation*}
\end{definition}

Dos Santos Ferreira \& Staubach~\cite{DSFS} also consider both rough and smooth classes of phases. It is the following rough class of phase functions, first introduced in \cite{DSFS}, that will be of interest to us here.

\begin{definition}
    A real-valued function $\varphi(x,\xi)$ belongs to the class $L^\infty \Phi^k$, if it is positively homogeneous of degree one in the $\xi$-variable, smooth on $\R^n\setminus\{0\}$ in the $\xi$-variable, bounded and measurable in the $x$-variable, and if, for each multi-index $\alpha$ such that $|\alpha| \geq k$, there exists a $C_\alpha$ such that
    \begin{equation*}
        \sup_{\xi \in \R^n\setminus\{0\}} |\xi|^{-1+|\alpha|} \|\partial^\alpha_\xi \varphi(\cdot,\xi)\|_{L^\infty} \leq C_\alpha < \infty.
    \end{equation*}
\end{definition}

As noted on page~2 of \cite{DSFS}, the archetypal example of a phase in $L^\infty \Phi^2$ is $\varphi(x,\xi) = x\cdot\xi + t(x)|\xi|$, where $t$ is a bounded measurable function, because this phase appears in the linearisation of the maximal function associated with averages on surfaces. We note that for this phase, we even have that $\varphi(x,\xi) - x\cdot\xi \in L^\infty\Phi^1$.

The main result of this paper is the Theorem~\ref{thm:pointwise} below. It is a pointwise estimate of a Fourier integral operator by the $L^r$ Hardy-Littlewood maximal operator. We denote the usual uncentred \emph{Hardy-Littlewood maximal operator} on balls by $M$ and, for $r>1$, the \emph{$L^r$-maximal operator} by
    \begin{equation*}\label{def:lphlmaximal}
        M_r(f)(x)  = \sup_{B\ni x} \left(\frac{1}{|B|} \int_B |f|^r\right)^{\frac{1}{r}},
    \end{equation*}
so $M = M_1$. We will prove, in parallel, a corresponding pointwise sparse bound for Fourier integral operators. Before defining the notion of a pointwise sparse bound, we first need to define $L^r$-averages and the notion of a sparse collection of sets: For $r < \infty$, the notation
    \begin{equation*}
        \langle f \rangle_{r,Q} := \left(\frac{1}{|Q|} \int_Q |f|^r \right)^\frac{1}{r}
    \end{equation*}
is the $L^r$-average over a set $Q$ and $\langle f \rangle_{\infty,Q} := \sup_Q |f|$;
A collection $\mathcal{S}$ of sets is said to be \emph{sparse} if there exists an $\eta \in (0,1]$ such that for each $Q \in \mathcal{S}$, we can find a set $E(Q)$ such that $E(Q) \subseteq Q$ and $|E(Q)| \geq \eta|Q|$, and that the collection $\{E(Q) \colon Q\in\mathcal{S}\}$ is pairwise disjoint. When we say an operator $T$ satisfies a \emph{pointwise sparse bound} (with exponent $r$), we mean that there exists a $C>0$, such that for each function $f$ there exists a sparse collection $\mathcal{S}$ of sets such that 
\begin{equation*}
    \left|T(f)\right| \leq C \sum_{Q \in \mathcal{S}} \langle f \rangle_{r,Q}\chi_Q(x).
\end{equation*}

Pointwise sparse bounds were first considered by Lerner~\cite{lerner2013A2,lerner2013CZ} and used to provide a simple proof of the $A_2$-conjecture. Due to the fact that that $M_r$ itself satisfies a pointwise sparse bound with exponent $r$ (see, for example, Section~2.1 in \cite{DUARTE2024125605}), any pointwise bound by $M_r$, immediately gives rise to a pointwise sparse bound. However, we will see that, in this case, a direct proof of the pointwise sparse bounds is just as easily obtained as an application of the results in \cite{Mukeshimana_Rule_2026}, which is a reworking of a method applied by Beltran \& Cladek~\cite{BC} to pseudodifferential operators.

Finally, before stating our main result we introduce an additional non-degeneracy in the phase, also considered in \cite{DSFS}. To describe this, given an $n\times n$ matrix $M$ of rank $n-1$, we write $\det_{n-1} M$ to mean the determinant of the matrix $PMP$ where $P$ is the projection on to the orthogonal complement of the kernel of $M$. The non-degeneracy condition will then be that $\det_{n-1} \partial^2_\xi \varphi(x,\xi)$ is uniformly bounded away from zero. Theorem~\ref{thm:pointwise} can be considered a refinement of Theorem~3.9 in \cite{DSFS}. More precisely, the pointwise bound by the Hardy-Littlewood maximal operator in Theorem~\ref{thm:pointwise} is new and the weighted boundedness of Theorem~3.9 in \cite{DSFS} follows immediately from this when $r=1$.

\begin{thm} \label{thm:pointwise}
    Let $T^{\varphi}_{a}$ be a Fourier integral operator defined as in \eqref{eq:fio} with an amplitude $a\in L^\infty S^m_\rho$ and phase function $\varphi$ such that $\varphi(x,\xi) - x\cdot\xi \in L^\infty\Phi^1$. Suppose further that $|\det_{n-1} \partial^2_\xi \varphi(x,\xi)| \geq c > 0$ and
    \begin{equation*}
        m < -(n-1)\frac{\rho}{2} - \frac{\rho}{r} - \frac{n}{r}(1-\rho)
    \end{equation*}
    for some $r \in [1,2]$, then we have that there exists a constant $C$ such that
    \begin{equation*}
        T^{\varphi}_{a}f(x) \leq C M(f^r)^{1/r}(x)
    \end{equation*}
    and for each bounded and compactly supported function $f$ there exists a sparse collection $\mathcal{S}$ such that
\begin{equation*}
    \left|T^{\varphi}_{a}(f)\right| \leq C \sum_{Q \in \mathcal{S}} \langle f \rangle_{r,Q}\chi_Q(x).
\end{equation*}
\end{thm}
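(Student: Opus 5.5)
The plan is to write $T^{\varphi}_a$ as an integral operator with kernel and prove pointwise decay bounds on that kernel. We split the amplitude with a Littlewood--Paley partition of unity in $\xi$: $a = a_0 + \sum_{j\ge 1} a_j$, where $a_0$ is supported in $|\xi|\lesssim 1$ and $a_j$ in $|\xi|\sim 2^j$. This gives $T^{\varphi}_a = \sum_j T_j$, with
\[
T_j f(x)=\int K_j(x,y)f(y)\,dy, \qquad K_j(x,y) = (2\pi)^{-n}\int a_j(x,\xi)e^{i(\varphi(x,\xi)-y\cdot\xi)}\,d\xi .
\]
Since $\varphi(x,\xi)-x\cdot\xi\in L^\infty\Phi^1$, the gradient $\nabla_\xi\varphi(x,\xi)-x$ is bounded uniformly in $x$. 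So $K_j(x,\cdot)$ is concentrated near the bounded set $\{\nabla_\xi\varphi(x,\xi)\}$, which is a translate of $x$ by a bounded amount. The low-frequency piece $T_0$ has a kernel bounded by $C_N(1+|x-y|)^{-N}$, so it is controlled by $Mf\le M_r f$.

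For $j\ge1$ we use the Seeger--Sogge--Stein second dyadic decomposition. Split the annulus $|\xi|\sim 2^j$ into about $2^{j(n-1)/2}$ angular sectors of aperture $2^{-j/2}$, centred at directions $\xi_\nu$. On each sector, linearise the phase as
\[
\varphi(x,\xi) = \nabla_\xi\varphi(x,\xi_\nu)\cdot\xi + h_\nu(x,\xi).
\]
Homogeneity and the $L^\infty\Phi^1$ (indeed $\Phi^2$) bounds then give the symbol estimates of Seeger--Sogge--Stein, but now for the rough amplitude class $L^\infty S^m_\rho$. Integration by parts, using the anisotropic vector fields adapted to the sector, shows that the piece $K_{j,\nu}(x,\cdot)$ is bounded by $2^{jm}2^{j(n+1)/2}$ times a rapidly decaying function of $2^{j\rho}|(y-\nabla_\xi\varphi(x,\xi_\nu))\cdot\xi_\nu|$ and $2^{j\rho/2}|(y-\nabla_\xi\varphi(x,\xi_\nu))'|$. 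The $\rho<1$ losses enter here: each $\xi$-derivative of the amplitude only gains $2^{-j\rho}$, so the decay scales are $2^{-j\rho}$ and $2^{-j\rho/2}$ rather than $2^{-j}$ and $2^{-j/2}$.

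The key estimate is a bound on $\|K_j(x,\cdot)\|_{L^{r'}}$, uniform in $x$, after which the result follows. Take $B$ to be a ball of radius $\sim 1$ about $x$ that contains the essential support of $K_j(x,\cdot)$, with the Schwartz tails handled by annuli. Hölder's inequality gives $|T_jf(x)|\lesssim \|K_j(x,\cdot)\|_{L^{r'}}\langle f\rangle_{r,B}$ plus tails. We obtain the $L^{r'}$ norm by interpolating between two endpoints.

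\emph{$L^\infty$ endpoint ($r=1$).} Here we need $\|K_j(x,\cdot)\|_{L^\infty}$. The trivial estimate is $2^{j(m+n)}$. The non-degeneracy $|\det_{n-1}\partial^2_\xi\varphi|\ge c$ should improve this to $2^{j(m+(n+1)/2)}$ for $\rho=1$, via stationary phase in the $n-1$ angular variables. For general $\rho$ we instead expect the bound $2^{j(m+(n-1)\rho/2+\rho+n(1-\rho))}$. This is the hardest step. We would try to get it by combining the sector decomposition with the fact that the $2^{j(n-1)/2}$ sector pieces $\nabla_\xi\varphi(x,\xi_\nu)$ lie on an $(n-1)$-dimensional surface whose curvature is given by the nondegeneracy condition. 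This lets us count how many sector pieces can overlap at a given point $y$. Roughly, the overlap is $O(1)$ modulo the enlargement by $2^{j(1-\rho)}$ of each piece.

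\emph{$L^2$ endpoint ($r=2$).} Plancherel in $y$ gives $\|K_j(x,\cdot)\|_{L^2}\lesssim 2^{j(m+n/2)}$. This matches the stated threshold at $r=2$, since $(n-1)\rho/2+\rho/2+n(1-\rho)/2 = n/2$.

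Interpolating with weight $\theta = 2/r'$ gives
\[
\|K_j(x,\cdot)\|_{L^{r'}}\lesssim 2^{j\left(m+(n-1)\rho/2+\rho/r+n(1-\rho)/r\right)},
\]
which is summable in $j$ exactly under the stated condition on $m$. This yields $|T^{\varphi}_af(x)|\lesssim M_r f(x)=M(|f|^r)^{1/r}(x)$.

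For the sparse bound, the pointwise bound combined with the sparse domination of $M_r$ is already enough. A direct route instead applies the criterion of \cite{Mukeshimana_Rule_2026}, in the style of Beltran--Cladek: the localised kernel estimates above provide the required $L^r\to L^\infty$ bounds at unit scale together with the Schwartz tails, and these are exactly the hypotheses of that criterion.
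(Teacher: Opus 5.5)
Your outline is sound, but it is organised differently from the paper.

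\textbf{How the paper argues.} The paper rescales $\xi\mapsto 2^{j\rho}\xi$, uses cones of aperture $2^{-j\rho/2}$, proves the anisotropic bounds of Lemma~\ref{lem:bnuj}, and then treats every $r\in[1,2]$ at once. It applies a H\"older inequality jointly in $(\nu,z)$ with the weights $g^\nu_j$. Each weighted piece $g^\nu_j(\cdot)^NK^\nu_j$ is bounded by Hausdorff--Young, and these bounds are $\ell^{r'}$-summed over the $O(2^{j(n-1)\rho/2})$ sectors. The non-degeneracy is used only to show $\sum_\nu g^\nu_j(\nabla_\xi\theta(x,\xi^\nu_j)+z)^{-rN}\lesssim 1$.

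\textbf{How you argue, and what each route buys.} You prove only the two endpoints for $\|K_j(x,\cdot)\|_{L^{r'}}$ and use log-convexity, $\|K\|_{L^{r'}}\le\|K\|_{L^\infty}^{1-2/r'}\|K\|_{L^2}^{2/r'}$. The resulting exponents match the paper's exactly. This isolates the role of the hypotheses. The $r=2$ case is pure Plancherel and needs no non-degeneracy. Your $L^\infty$ endpoint is the paper's $r=1$ computation \eqref{ineq:rone}--\eqref{ineq:IIagain}, done with unrescaled Seeger--Sogge--Stein sectors of aperture $2^{-j/2}$, where the classical symbol bounds apply pointwise in $x$. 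The paper's version gives the local $L^r\to L^\infty$ bounds \eqref{ineq:LrtoLinfty} for each $r$ directly, in the form reused for Theorem~\ref{thm:sparseform}; your kernel bounds yield the same estimates. For the sparse bound, appealing to the sparse domination of $M_r$ is the alternative the paper itself mentions.

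\textbf{Make the $L^\infty$ count precise.} This endpoint carries all the weight, and you leave it as an expectation.
\begin{itemize}
\item Your scales are $2^{-j\rho}$ along $\xi_\nu$ and $2^{-j\rho/2}$ across. The non-degeneracy makes the projections of the centres $x+\nabla_\xi\theta(x,\xi_\nu)$ onto $\overline{\xi}^{\perp}$ $\gtrsim 2^{-j/2}$-separated. So the overlap multiplicity is $2^{j(1-\rho)(n-1)/2}$, an enlargement of $2^{j(1-\rho)/2}$ in each of the $n-1$ transverse directions, not $2^{j(1-\rho)}$. Multiplied by the per-piece bound $2^{jm}2^{j(n+1)/2}$, this gives $2^{j(m+n-(n-1)\rho/2)}$, which is your stated exponent.
\item The separation is only local. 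As in the paper, restrict to conic neighbourhoods $U_t(\overline{\xi})$ with $t$ uniform in $x$. Convert the anisotropic weights to the frame of $\overline{\xi}$ using $2^{j\rho/2}t\le 2^{j\rho}$, and sum over a finite cover.
\item Your transverse scale is conservative. With aperture $2^{-j/2}$, angular derivatives of $\eta_\nu$ and $e^{ih_\nu}$ cost $2^{-j/2}$ and those of $a$ cost $2^{-j\rho}$. The true scale is therefore $2^{-j\min(\rho,1/2)}$, and the sharp count gives more than you need.
\end{itemize}

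\textbf{Low-frequency kernel.} It does not decay like $(1+|x-y|)^{-N}$, because $\theta$ is homogeneous and not smooth at $\xi=0$. For example, $\theta=t(x)|\xi|$ gives only $|x-y|^{-n-1}$ decay. In general you only get a bound like $(1+|x-y|)^{-n-\mu}$ with $0<\mu<1$, which still yields $|T_0f|\lesssim Mf$.
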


\begin{remark}
    The examples in Section~3.2 of \cite{DSFS} show that, when $\rho=1$, some of the assumptions of Theorem~\ref{thm:pointwise} are necessary. Counterexample~1 therein shows that, for the given limiting value of $m$, a rank condition on $\partial^2_\xi\varphi$ is necessary and Counterexample~2 shows that this limiting value of $m$ cannot be improved (although what can be said at the end-point remains an open question).
\end{remark}

Theorem~\ref{thm:pointwise} can be combined with recent unweighted $L^p$-boundedness results by Sindayigaya~\cite{SindayigayaLp}, Sindayigaya, Wu \& Huang~\cite{SindayigayaWuHuang}, and Wu \& Yang~\cite{WuYang} to obtain sparse form bounds. An operator $T$ is said to satisfy a \emph{sparse form bound} with exponents $r$ and $s'$, if there exists a constant $C>0$, such that for each pair of functions $f$ and $g$, there exists a sparse collection $\mathcal{S}$ such that
\begin{equation*} \label{ineq:sparseform}
    \left|\langle T(f),g\rangle\right| \leq C \sum_{Q\in\mathcal{S}} \langle f \rangle_{r,Q}\langle g \rangle_{s',Q}|Q|.
\end{equation*}
It can be easily seen that an operator that satisfies a pointwise sparse bound also satisfies a sparse form bound, so the later is no stronger a condition. Nevertheless, it is sufficiently strong that many estimates of interest in harmonic analysis, such as weighted boundedness, Fefferman-Stein inequalities and Coifman-Fefferman estimates, follow from a sparse form bound~--- see, for example, \cite{BernicotFreyPetermichl}, \cite{Conde-AlonsoCuliucPlinioOu} and \cite{LiPerezRivera-RiosRoncal}~--- but, as these applications are recorded elsewhere (see, for example, \cite{BC}), we will not expand further on them here. Here and throughout the paper, for an exponent $s$, $s'$ will denote the dual exponent of $s$, so $1/s + 1/s' = 1$.

To state our second theorem, we need one further assumption on the phase function, which was introduced by Ma \& Zhu~\cite{ZhuMa} and is a generalisation of the rough non-degeneracy condition introduced in \cite{DSFS}. We assume there exists a constant $c>0$ such that
\begin{equation}\label{def:phaseZhuMa}
    |\{x \colon |\nabla_\xi\varphi(x,\xi)-y| \leq r\}| \leq c^{-1}r^n
\end{equation}
for all $r>0$ and $\xi,y\in\R^n$. Let us also define
\begin{equation*}
    m_\rho(r,s) = \begin{cases}
                  -\frac{n(1-\rho)}{2} - \frac{(n-1)\rho}{4} - \rho\left(\frac{1}{r}-\frac{1}{2}\right) + \frac{(n+1)\rho}{2}\left(\frac{1}{s}-\frac{1}{2}\right), &\mbox{if $2 \leq r \leq s$;} \\
                  -\frac{n(1-\rho)}{2} - \frac{(n-1)\rho}{4} + \left((n-1)\rho - n\right)\left(\frac{1}{r}-\frac{1}{2}\right) + \frac{(n+1)\rho}{2}\left(\frac{1}{s}-\frac{1}{2}\right), &\mbox{if} \begin{cases}
                      \mbox{$s' \leq r \leq 2$, or} \\
                      \mbox{$r \leq s \leq r'$ and} \\
                      \mbox{$0\leq\rho\leq\frac{1}{2}$;}
                  \end{cases} \\
                  -\frac{n(1-\rho)}{2} - \frac{(n-1)\rho}{4}  - \frac{(n+1)}{2}\left(\frac{1}{r}-\frac{1}{2}\right) + \frac{3\rho-1 + n(\rho-1)}{2}\left(\frac{1}{s}-\frac{1}{2}\right), &\mbox{if}
                  \begin{cases}
                      \mbox{$r \leq s \leq r'$ and} \\
                      \mbox{$\frac{1}{2}<\rho\leq1$.}
                  \end{cases}
                  \end{cases}
\end{equation*}

\begin{thm}\label{thm:sparseform}
    Let $T^{\varphi}_{a}$ be a Fourier integral operator defined as in \eqref{eq:fio} with an amplitude $a\in L^\infty S^m_\rho$ and phase function $\varphi$ such that $\varphi(x,\xi) - x\cdot\xi \in L^\infty\Phi^1$. Suppose further that $|\det_{n-1} \partial^2_\xi \varphi(x,\xi)| \geq c > 0$ and that \eqref{def:phaseZhuMa} holds for some $c>0$. Then if $\rho\in(0,1]$, $1 \leq r \leq s \leq \infty$ and $m < m_\rho(r,s)$ then there exists a constant $C>0$ such that for each pair of bounded continuous functions $f$ and $g$, we can find a sparse collection $\mathcal{S}$ such that
\begin{equation*}
    \left|\langle T(f),g\rangle\right| \leq C \sum_{Q\in\mathcal{S}} \langle f \rangle_{r,Q}\langle g \rangle_{s',Q}|Q|.
\end{equation*}
\end{thm}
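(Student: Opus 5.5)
The plan is to treat this as a single-scale-plus-tail argument of the type in \cite{Mukeshimana_Rule_2026} and \cite{BC}. These results yield sparse form bounds once we know two things: unweighted $L^p$-type bounds for the operator, and a localisation/off-diagonal estimate for its Littlewood--Paley pieces. First I would split $a = a_0 + \sum_{j\geq1} a_j$ with a dyadic partition of unity $\psi_j(\xi)$ supported where $|\xi|\sim 2^j$, and write $T_j = T^{\varphi}_{a_j}$. The low-frequency part $T_0$ has a kernel that decays rapidly away from $\nabla_\xi\varphi(x,0)$-type shifts. More precisely, since $\varphi - x\cdot\xi\in L^\infty\Phi^1$, the kernel is concentrated where $|x-y|\lesssim 1$, after the bounded shift $\nabla_\xi(\varphi - x\cdot\xi)$. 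So $T_0$ is dominated pointwise by $Mf$, which by the sparse bound for $M$ takes care of it.

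For the pieces $T_j$, I would use the bounded perturbation $\varphi(x,\xi)-x\cdot\xi\in L^\infty\Phi^1$ to show that the kernel $K_j(x,y)$ is essentially supported in $|x-y|\lesssim 1$ (a fixed neighbourhood of the singular set), with tails beyond distance $2^{-j(1-\rho)}\cdot$(unit scale) decaying like $2^{-jN}|x-y|^{-N}$. This follows by integrating by parts in $\xi$, using the symbol estimates from Definition~\ref{def:amplitude}. The needed inputs are then the following.

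First, there are $L^r\to L^{s}$ estimates for $T_j$ localised to cubes $Q$ of side $\sim 1$, with norm $\lesssim 2^{j(m - m_\rho(r,s))}$ (up to an $\epsilon$-loss). I would obtain these by interpolation between three endpoints: the $L^1\to L^\infty$ bound coming from the pointwise estimate proved in Theorem~\ref{thm:pointwise}; the $L^2\to L^2$ bound for rough FIOs, which requires the non-degeneracy \eqref{def:phaseZhuMa} and has order $-n(1-\rho)/2$; and the $L^p$ results of \cite{SindayigayaLp,SindayigayaWuHuang,WuYang}, valid under \eqref{def:phaseZhuMa}. The three regimes in the definition of $m_\rho(r,s)$ correspond exactly to which endpoints are interpolated. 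Second, there are the tail estimates above.

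The sparse-form machinery then sums over $j$ geometrically, because $m<m_\rho(r,s)$ leaves room $2^{-j\epsilon}$. The large-scale structure is handled by a Calderón--Zygmund stopping time on dyadic cubes of side $\geq1$. At those scales the operator is localised and $\langle f\rangle_{r,Q}$ controls everything, while cubes of side $<1$ are absorbed through the unit-scale localisation.

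I expect the main obstacle to be verifying the precise endpoint exponents, that is, checking that the interpolated single-scale norm matches $m_\rho(r,s)$ in each of the three cases. This is most delicate in the regime $r\leq s\leq r'$ with $\rho>1/2$. There I would first interpolate the $L^1\to L^\infty$ bound (order $-(n-1)\rho/2-\rho-n(1-\rho)$, coming from Theorem~\ref{thm:pointwise} with $r=1$) against the $L^{p}\to L^{p}$ bounds on the diagonal $s=r$, and only then against $L^2$.
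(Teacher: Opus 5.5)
\textbf{There is a genuine gap: your interpolation endpoints cannot reach $m_\rho(r,s)$ when $\rho<1$.} Your architecture matches the paper's. You peel off $T_0$ and the spatial tails by maximal or pointwise bounds, then feed summable local $L^r\to L^s$ bounds for the localised high-frequency pieces into Proposition~\ref{intro:sparseform}. The problem is that from Theorem~\ref{thm:pointwise} you take only the case $r=1$, i.e.\ the corner $(\frac1r,\frac1{s'})=(1,1)$.

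For $\rho<1$, the point $(\frac12,1)$, where $m_\rho(2,\infty)=-\frac n2$, is a genuine vertex of the piecewise-linear function $m_\rho$. The slopes in $\frac1r$ on the two sides of the line $\frac1r=\frac12$ are $-\rho$ and $(n-1)\rho-n$, which differ unless $\rho=1$. This vertex lies on the face $\frac1{s'}=1$ of the square. Interpolation can only produce an $L^r\to L^\infty$ bound from two $L^{r_i}\to L^\infty$ bounds. On that face you have only two bounds:
\begin{itemize}
\item $L^1\to L^\infty$, of order $-n(1-\rho)-\frac{(n+1)\rho}{2}$;
\item $L^\infty\to L^\infty$, of order $-\frac{n(1-\rho)}{2}-\frac{(n-1)\rho}{2}$.
\end{itemize}
Interpolating these gives, at $(\frac12,1)$, only the order $-\frac{3n(1-\rho)}{4}-\frac{n\rho}{2}$, which is worse than $-\frac n2$ by $\frac{n(1-\rho)}{4}$.

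So for $\rho<1$ your scheme fails in both regimes $2\le r\le s$ and $s'\le r\le 2$. It works for $\rho=1$, where those two regimes form a single linear piece. It also works in the regime $r\le s\le r'$, whose vertices $(1,1)$, $(1,0)$, $(\frac12,\frac12)$ you do have; the order in which you interpolate there is immaterial.

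The missing ingredient is the case $r=2$ of the Hausdorff--Young/H\"older argument in Subsection~\ref{sec:spatiallylocalised}. Estimate \eqref{ineq:LrtoLinfty} holds for every $r\in[1,2]$. At $r=2$ it gives $\langle T^B_j(f\chi_{\frac13Q})\rangle_{\infty,Q}\lesssim 2^{j(m+\frac n2)}\langle f\rangle_{2,Q}$, and more generally it covers the whole edge from $(1,1)$ to $(\frac12,1)$. The paper uses exactly this as the $(\frac12,1)$ endpoint in the first two regimes.

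\textbf{Secondary corrections.}
\begin{itemize}
\item The $L^2$ bound available for phases that are merely measurable in $x$ is Theorem~\ref{thm:YangWu} with $p=2$. Its order is $-\frac{n(1-\rho)}{2}-\frac{(n-1)\rho}{4}=m_\rho(2,2)$, not $-\frac{n(1-\rho)}{2}$; the latter is the rough pseudodifferential order.
\item For $|z|\ge 1+2\|\nabla_\xi\theta\|_{L^\infty}$ the kernel satisfies $|K_j(x,z)|\lesssim 2^{j(m+n)}(2^{j\rho}|z|)^{-N}$, not $2^{-jN}|z|^{-N}$. Each integration by parts gains only $2^{-j\rho}$, and this is where $\rho>0$ is needed.
\item Similarly, the $T_0$ kernel decays only like $(1+|z|)^{-n-\mu}$ with $\mu<1$, not rapidly, because $e^{i\theta}$ is not smooth at $\xi=0$. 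This still suffices for domination by $M$.
\item Proposition~\ref{intro:sparseform} requires $T^j(f\chi_{\frac13Q})$ to be supported in $Q$. You should therefore truncate the kernel exactly, writing $T_j=T_j^A+T_j^B$, and deduce the $L^p$ bounds for $T_j^B$ from those for $T_j$ together with the pointwise bounds for $T_j^A$, as the paper does.
\end{itemize}

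Your use of plain interpolation of $j$-dependent norms is fine. For fixed $j$, $\psi_j a$ lies in every $L^\infty S^{m'}_\rho$ with seminorms $O(2^{j(m-m')})$. Hence Riesz--Thorin applied to the single operator $T_j^B$ suffices in place of Stein's analytic families, once the correct vertex bounds are in hand.
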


\begin{figure}[t]\label{fig1}
\begin{center}
\begin{tikzpicture}
%
%
\begin{scope}[scale=0.9,shift={(0,0)}]
%
\draw[help lines,step=2cm, color=black!10] (0,0) grid (4,4);
\draw[dashed, color=black!50] (0,4) -- (4,4) -- (4,0) -- (0,4);
\draw[dashed, color=black!50] (2,4) -- (2,2);
\filldraw[black] (4,4) circle (1pt) node[anchor=south,xshift=9pt]{\small $-n(1-\rho) - \frac{(n+1)\rho}{2}$};
\filldraw[black] (2,4) circle (1pt) node[anchor=south,xshift=-6pt]{\small $-\frac{n}{2}$};
\filldraw[black] (2,2) circle (1pt) node[anchor=north]{\small $-\frac{n(1-\rho)}{2} - \frac{(n-1)\rho}{4}$};
\filldraw[black] (0,4) circle (1pt);
\draw[-] (0,4) to[in=-120, out=70, looseness=1] (1.5,5.5) node[above]{\small $-\frac{n(1-\rho)}{2} - \frac{(n-1)\rho}{2}$};
\filldraw[black] (4,0) circle (1pt);
\draw[-] (4,0) to[in=-110, out=60, looseness=1] (5.5,0.7) node[above]{\small $-n(1-\rho)$};
\draw[->, thick] (-0.5,0)--(5,0) node[below right]{$\frac{1}{r}$};
\draw[->, thick] (0,-0.5)--(0,5) node[left,xshift=-6pt]{$\frac{1}{s'}$};
\draw[] (2,0)--(2,-0.1) node[below]{$\frac{1}{2}$};
\draw[] (4,0)--(4,-0.1) node[below]{$1$};
\draw[] (0,4)--(-0.1,4) node[left]{$1$};
\draw[] (0,2)--(-0.1,2) node[left]{$\frac{1}{2}$};
\end{scope}
%
%
\begin{scope}[scale=0.9,shift={(8,0)}]
%
\draw[help lines,step=2cm, color=black!10] (0,0) grid (4,4);
\draw[dashed, color=black!50] (0,4) -- (4,4) -- (4,0) -- (0,4);
\draw[dashed, color=black!50] (2,4) -- (2,2) -- (4,4);
\filldraw[black] (4,4) circle (1pt) node[anchor=south,xshift=9pt]{\small $-n(1-\rho) - \frac{(n+1)\rho}{2}$};
\filldraw[black] (2,4) circle (1pt) node[anchor=south,xshift=-6pt]{\small $-\frac{n}{2}$};
\filldraw[black] (2,2) circle (1pt) node[anchor=north]{\small $-\frac{n(1-\rho)}{2} - \frac{(n-1)\rho}{4}$};
\filldraw[black] (0,4) circle (1pt);
\draw[-] (0,4) to[in=-120, out=70, looseness=1] (1.5,5.5) node[above]{\small $-\frac{n(1-\rho)}{2} - \frac{(n-1)\rho}{2}$};
\filldraw[black] (4,0) circle (1pt);
\draw[-] (4,0) to[in=-110, out=60, looseness=1] (5.5,0.7) node[above]{\small $\rho-\frac{n+1}{2}$};
\draw[->, thick] (-0.5,0)--(5,0) node[below right]{$\frac{1}{r}$};
\draw[->, thick] (0,-0.5)--(0,5) node[left,xshift=-6pt]{$\frac{1}{s'}$};
\draw[] (2,0)--(2,-0.1) node[below]{$\frac{1}{2}$};
\draw[] (4,0)--(4,-0.1) node[below]{$1$};
\draw[] (0,4)--(-0.1,4) node[left]{$1$};
\draw[] (0,2)--(-0.1,2) node[left]{$\frac{1}{2}$};
\end{scope}
\end{tikzpicture}
\end{center}
\caption{The figure shows the limiting values $m_\rho(r,s)$ of $m$ in Theorem~\ref{thm:sparseform} in the $(\frac{1}{r},\frac{1}{s'})$-plane. It is a piecewise linear function and the dashed lines depict the boundary of each linear piece. The left axes show the case $\rho\in(0,\frac{1}{2}]$ and the right axes $\rho\in(\frac{1}{2},1]$. The values of $m_\rho(r,s)$ are written on the corners of each linear piece~--- only the expression on the corner (1,0) differs between the cases $\rho\leq1/2$ and $\rho>1/2$, and in going from the former case to the later, one linear piece is split into two.}
\end{figure}
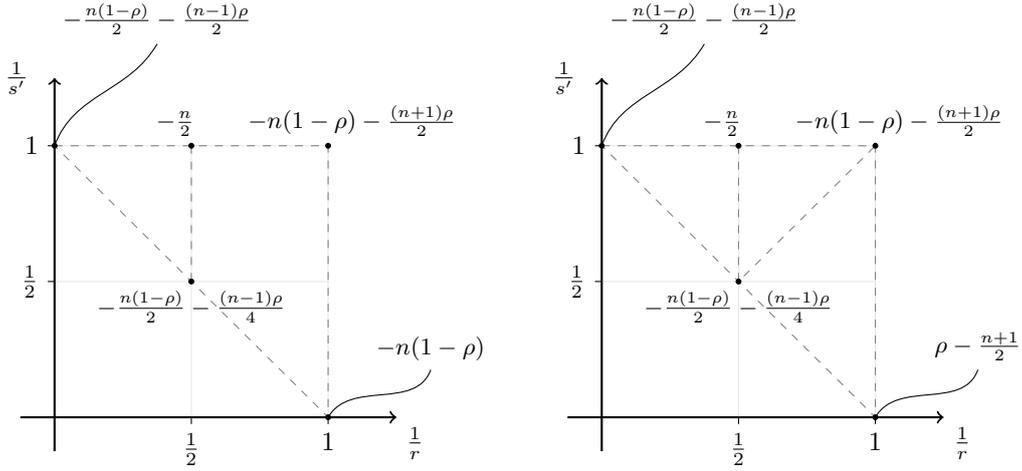

Figure~\ref{fig1} shows the limiting values $m_\rho(r,s)$ for which Theorem~\ref{thm:sparseform} applies in the variables $(\frac{1}{r},\frac{1}{s'})$. To the best of our knowledge, the only previous attempt at proving sparse form bounds for Fourier integral operators was by Mattsson~\cite{Mattsson24}, who considered smooth amplitudes and phases.

In Section~\ref{sec:prelim} we set some standard notation and collect the previous results we will make use of. In Section~\ref{sec:pointwise_estimates} we prove the main pointwise estimate of Theorem~\ref{thm:pointwise}. This is done by splitting the operator, both in the frequency and spatial domains and the subsequent estimates are carried out over Subsections~\ref{sec:lowfreq}, \ref{sec:spatialnonloc} and \ref{sec:spatiallylocalised}. The first two subsections essentially a repeat of earlier work in \cite{DSFS}, whereas Subsection~\ref{sec:spatiallylocalised} is the main novelty of this article. Finally, in Subsection~\ref{sec:conlusion}, the estimates are collected together to prove Theorem~\ref{thm:pointwise} and, finally, an interpolation argument provides the proof of Theorem~\ref{thm:sparseform}.

\section{Preliminaries} \label{sec:prelim}

As is common practice, we will make use of a Littlewood-Paley decomposition: Let $\psi_0(\xi)$ be a smooth non-negative real-valued function of $\xi \in \R^n$ which is equal to $1$ on the unit ball centred at the origin with support in the concentric ball of radius $2$. Then let
\begin{equation*}
    \psi_j(\xi)=\psi_0(2^{-j}\xi)-\psi_0(2^{-j+1}\xi)
\end{equation*}
for each positive integer $j$ and $\psi(\xi):=\psi_1(\xi)$. Then $\psi_j(\xi)=\psi(2^{-j+1}\xi)$ and one has the following Littlewood-Paley partition of unity.
\begin{equation}\label{eq:littpaldeco}
 \sum^\infty_{j=0} \psi_j(\xi)=1 
\end{equation}
for all $\xi\in\mathbb{R}^n$.

We will also make use of a slight variant of a Seeger-Sogge-Stein decomposition on each dyadic piece: For each positive integer $j$ and parameter $\rho$ we fix a collection of unit vectors $\{\xi^\nu_j\}_\nu$ that satisfy the following two conditions.
\begin{enumerate}[label=(\roman*)]
    \item \label{xione} $\left|\xi^{\nu}_j-\xi^{\nu'}_j\right|\geq 2^{-j\rho/2}$ if $\nu\neq \nu'$, and
    \item \label{xitwo} for any unit vector $\xi$, there exists a $\nu$ such that $\left|\xi-\xi^{\nu}_j\right|\leq 2^{-j\rho/2}$
\end{enumerate}
Observe that there are at most $O(2^{(n-1)j\rho/2})$ elements in the collection $\{\xi^{\nu}_j\}_\nu$. Letting
\begin{equation*}
    \Gamma^{\nu}_j:=\left\{\xi\in\mathbb{R}^n:\left|\frac{\xi}{|\xi|}-\xi^{\nu}_j\right| < 2\cdot2^{-j\rho/2}\right\}.
\end{equation*}
denote a cone with axis $\xi^{\nu}_j$ and aperture $2\cdot2^{-j\rho/2}$, one can define an associated partition of unity as
\begin{equation*}
    \eta^{\nu}_j(\xi):=\frac{\phi\left(2^{j\rho/2}\left(\frac{\xi}{|\xi|}-\xi^{\nu}_j\right)\right)}{\sum_{\tilde{\nu}} \phi\left(2^{j\rho/2}\left(\frac{\xi}{|\xi|}-\xi^{\tilde{\nu}}_j\right)\right)},
\end{equation*}
where $\phi$ is a smooth non-negative function with $\phi(u)=1$ for $|u|\leq 1$ and $\phi(u)=0$ for $|u|\geq 3/2$. Thus, each $\eta^{\nu}_j$ is homogeneous of degree zero and supported in $\Gamma^{\nu}_j$. Moreover,
\begin{equation}\label{eq:seegersoggestein}
    \sum_\nu \eta^{\nu}_j(\xi) = 1
\end{equation}
for all $\xi \neq 0$, and
\begin{equation*}
    \sup_{\xi \in \R^n} \left|\partial^\alpha \eta^{\nu}_j(\xi)\right| \lesssim 2^{j\rho|\alpha|/2}
\end{equation*}
with an implicit constant that may depend on the multi-index $\alpha$.

We will make use of the following two propositions from \cite{Mukeshimana_Rule_2026}, which give sufficient conditions for pointwise sparse bounds and sparse form bounds, respectively, to hold.

\begin{proposition}\label{intro:pwsparse}
Let $Q_k$ denote a cube of radius $2^{-k}$ with $k\in\Z$ and assume that the operator $T$ is a countable sum $T = \sum_{j\in I} T^{j}$ of sublinear operators $T_j$ indexed by $j \in I$. Furthermore, assume that, for a given function $m \colon I \to \Z$, exponent $1\leq r \leq \infty$ and constants $B_j$ ($j\in I$) such that $\sum_{j\in I} B_{j} < \infty$, we have the estimate
\begin{equation*}
    \left|T^{j}\left(f\chi_{\frac{1}{3}Q_{\iota(j)}}\right)(x)\right| \leq B_{j} \left\langle f \right\rangle_{r,Q_{\iota(j)}}\chi_{Q_\iota(j)}(x).
\end{equation*}
Then there exists a constant $C$, such that for each bounded and compactly supported function $f$ there exists a sparse collection $\mathcal{S}$ such that
\begin{equation}\label{def:pwsparse}
    \left|T(f)\right| \leq C \sum_{Q \in \mathcal{S}} \langle f \rangle_{r,Q}\chi_Q(x).
\end{equation}
\end{proposition}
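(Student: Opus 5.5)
The plan is to reduce the whole sum $\sum_j T^j$ to a single dyadic $L^r$-maximal function, and then to dominate that maximal function sparsely by a standard stopping-time argument. The summability $\sum_j B_j<\infty$ is what allows the sum over $j$ to be replaced by a supremum over scales. Throughout I read the hypothesis as holding uniformly for \emph{every} cube $Q_{\iota(j)}$ of radius $2^{-\iota(j)}$, with the same $B_j$.

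\textbf{Step 1: localise $f$ at the scale of each $T^j$.} Fix $j$ and set $k=\iota(j)$. Partition $\R^n$ into a grid $\mathcal{P}_k$ of congruent cubes $P$, each equal to $\tfrac13 Q_P$ for a cube $Q_P$ of radius $2^{-k}$. Then $f=\sum_{P\in\mathcal{P}_k} f\chi_P$, with only finitely many nonzero terms since $f$ has compact support. Sublinearity of $T^j$ and the hypothesis give
\begin{equation*}
|T^j f(x)| \le \sum_{P\in\mathcal{P}_k}|T^j(f\chi_{\frac13 Q_P})(x)| \le B_j \sum_{P\in\mathcal{P}_k}\langle f\rangle_{r,Q_P}\chi_{Q_P}(x).
\end{equation*}
The cubes $\{Q_P\}_{P\in\mathcal{P}_k}$ have bounded overlap, with a constant depending only on $n$. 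Hence the right-hand side is at most $C_nB_j\sup_{Q\ni x,\ \ell(Q)\sim 2^{-k}}\langle f\rangle_{r,Q}$.

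\textbf{Step 2: pass to dyadic cubes and sum over $j$.} By the three-lattice theorem, there are $3^n$ shifted dyadic grids $\mathcal{D}^t$ such that every cube $Q$ lies in some $R\in\bigcup_t\mathcal{D}^t$ with $|R|\le C_n|Q|$. For such $R$ we have $\langle f\rangle_{r,Q}\le C\langle f\rangle_{r,R}$. Summing over $j$ and using $\sum_j B_j<\infty$,
\begin{equation*}
|Tf(x)|\le \sum_j |T^jf(x)| \le C\Big(\sum_j B_j\Big)\sum_{t}M^{\mathcal{D}^t}_r f(x),
\qquad M^{\mathcal{D}^t}_rf(x)=\sup_{x\in R\in\mathcal{D}^t}\langle f\rangle_{r,R}.
\end{equation*}
It therefore suffices to dominate each dyadic maximal function $M^{\mathcal{D}^t}_r f$ pointwise by a sparse sum. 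A finite union of sparse families, with their constants, is again sparse (with a smaller $\eta$), so this handles all $t$ at once.

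\textbf{Step 3: sparse bound for $M^{\mathcal{D}}_r$.} Fix $a=2^{n+1}$. For each $l\in\Z$ let $\mathcal{S}_l$ be the maximal cubes $R\in\mathcal{D}$ with $\langle |f|^r\rangle_{1,R}>a^l$. Maximal cubes exist because $f$ is bounded with compact support, so $\langle|f|^r\rangle_{1,R}\to0$ as $|R|\to\infty$ along any chain of nested cubes.

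Let $\mathcal{S}=\bigcup_l\mathcal{S}_l$ and, for $R\in\mathcal{S}_l$, put $E(R)=R\setminus\bigcup_{\mathcal{S}_{l+1}}R'$. A cube in $\mathcal{S}$ may occur for several $l$; in that case take the largest such $l$. Maximality gives
\begin{equation*}
\sum_{R'\in\mathcal{S}_{l+1},\,R'\subset R}|R'|\le a^{-l-1}\int_R|f|^r\le 2^na^{-1}|R|=\tfrac12|R|.
\end{equation*}
Here $\int_R|f|^r\le 2^na^l|R|$, because the parent of $R$ fails the stopping condition. Hence $|E(R)|\ge\frac12|R|$, and the sets $E(R)$ are pairwise disjoint.

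Pointwise, take $x$ with $M^{\mathcal{D}}_rf(x)^r\in(a^l,a^{l+1}]$. Then $x$ lies in some $R\in\mathcal{S}_l$, and
\begin{equation*}
M^{\mathcal{D}}_rf(x)\le a^{(l+1)/r}\le a^{1/r}\langle f\rangle_{r,R}\le a\sum_{R\in\mathcal{S}}\langle f\rangle_{r,R}\chi_R(x).
\end{equation*}
Combined with Step 2, this gives \eqref{def:pwsparse}.

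\textbf{Main obstacle.} The delicate point is Step 1 together with the passage to Step 2. Different $j$ may share the same scale $\iota(j)$, and the sum over $j$ could in principle pile up the averages. Replacing $\sum_jB_j(\cdot)$ by $(\sum_jB_j)\sup_k(\cdot)$ before invoking any sparse structure avoids this; bounded overlap of the $Q_P$ at a fixed scale is what makes that supremum legitimate. A secondary technical issue is the absence of a top cube in an infinite dyadic grid, which is handled by the compact support of $f$ as in Step 3.
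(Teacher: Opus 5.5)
Your proof is correct, but it takes a different route from the source of this proposition. The paper gives no proof here: it quotes the result from \cite{Mukeshimana_Rule_2026}, which it describes as a reworking of the method of Beltran--Cladek~\cite{BC}, i.e.\ a direct construction of the sparse family from the scale-localised pieces $T^j$.

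You instead notice that the hypothesis is strong enough to collapse the whole sum before any sparse structure enters. Tile $\R^n$ by cubes $P=\frac13Q_P$ at scale $2^{-\iota(j)}$, apply the hypothesis to $f\chi_P$, and use the bounded overlap of the cubes $Q_P=3P$. This gives $|T^jf|\le C_nB_jM_rf$, hence $|Tf|\le C_n(\sum_jB_j)M_rf$. The proposition then becomes a corollary of the sparse domination of $M_r$, which is precisely the alternative the paper itself mentions in the introduction.

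What your route buys is brevity and transparency. It shows that, under these hypotheses, the pointwise sparse bound is a consequence of the stronger pointwise bound by $M_r$; in the paper's application this is just the summed form of \eqref{ineq:lochigh}. What the Beltran--Cladek-type argument buys is that it carries over to Proposition~\ref{intro:sparseform}. There the pieces only satisfy $L^r\to L^s$ bounds with $s<\infty$, so no pointwise majorant is available, and the sum over $j$ cannot be traded for a supremum as in your Step~2.

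Two minor points remain.
\begin{itemize}
\item Your Step~3 is written with $|f|^r$, so it does not literally cover $r=\infty$. That case is trivial: Steps~1--2 give $|Tf|\lesssim\|f\|_\infty$, and the chain $\{2^iQ_0\}_{i\geq0}$ with $Q_0\supseteq\supp f$ is sparse.
\item That the union of the $3^n$ sparse families coming from different dyadic grids is again sparse is not an elementary observation when the cubes come from different grids. It is Hänninen's theorem (Theorem~1.3 of \cite{Hanninen}), the same result the paper invokes, and it should be cited rather than asserted.
\end{itemize}
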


\begin{proposition}\label{intro:sparseform}
Let $Q_k$ denote a cube of radius $2^{-k}$ with $k\in\Z$ and assume that the operator $T$ is a countable sum $T = \sum_{j\in I} T^{j}$ of sublinear operators $T_j$ indexed by $j \in I$. Furthermore, assume that, for a given function $\iota \colon I \to \Z$, exponents $1\leq r,s \leq\infty$ and constants $B_j$ ($j\in I$) such that $\sum_{j\in I} B_{j} < \infty$, we have the estimate
\begin{equation}\label{intro:Lr_Ls_averagebounds}
    \left\langle T^{j}\left(f\chi_{\frac{1}{3}Q_{\iota(j)}}\right)\right\rangle_{s,Q_{\iota(j)}} \leq B_{j} \left\langle f \right\rangle_{r,Q_{\iota(j)}}
\end{equation}
for each $j \in I$ and $T^{j}\left(f\chi_{\frac{1}{3}Q_{\iota(j)}}\right)$ is supported in $Q_{\iota(j)}$. Then, there exists a constant $C$, such that for each pair of  bounded and compactly supported functions $f$ and $g$, there exists a sparse collection $\mathcal{S}$ such that
\begin{equation*}
    \left\langle T(f), g\right\rangle \leq C \sum_{Q \in \mathcal{S}} \langle f \rangle_{r,Q}\langle g \rangle_{s',Q}|Q|.
\end{equation*}
\end{proposition}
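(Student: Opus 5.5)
The statement immediately above is Proposition~\ref{intro:sparseform}, the sufficient condition for sparse form bounds quoted from \cite{Mukeshimana_Rule_2026}. I would prove it by a stopping-time argument over dyadic scales, in the spirit of Beltran \& Cladek~\cite{BC}. The first step is to reduce to finitely many dyadic lattices. By the three-lattice (one-third) trick there are $3^n$ shifted dyadic grids $\mathcal{D}^1,\dots,\mathcal{D}^{3^n}$ with the following property: every cube $Q_{\iota(j)}$ of radius $2^{-\iota(j)}$ contains, and is comparable to, a dyadic cube $P$ from one of these grids with $\frac13 Q_{\iota(j)}\subseteq P$.

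It is then convenient to reorganise things per dyadic cube. Partition $\mathbf{R}^n$ into translates of $\frac13 Q_{\iota(j)}$ and write $f=\sum f\chi_{\frac13 Q}$. For each $j$, $T^j$ applied to $f$ is then a sum of localised pieces. Each piece is supported in the corresponding $Q$ and satisfies \eqref{intro:Lr_Ls_averagebounds}.

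Pairing with $g$ and applying H\"older on $Q$, we get for each $j$
\begin{equation*}
|\langle T^j f, g\rangle| \le B_j \sum_{Q\ \text{of scale}\ \iota(j)} \langle f\rangle_{r,Q}\langle g\rangle_{s',Q}|Q|.
\end{equation*}
Here sublinearity is used only to split $f$ across the cubes. Consequently
\begin{equation*}
|\langle Tf,g\rangle|\le \sum_k \Big(\sum_{\iota(j)=k} B_j\Big)\Lambda_k(f,g),
\qquad
\Lambda_k(f,g):=\sum_{\ell(Q)\sim 2^{-k}}\langle f\rangle_{r,Q}\langle g\rangle_{s',Q}|Q|.
\end{equation*}
Write $b_k:=\sum_{\iota(j)=k}B_j$, so that $\sum_k b_k<\infty$. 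Each $Q$ can be replaced by a dyadic cube of comparable size in one of the $3^n$ grids.

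This does not finish the argument by itself, because the union over all scales $k$ of the cubes in $\Lambda_k$ is not sparse. The key step is therefore a sparse domination of the form
\begin{equation*}
\sum_k b_k\sum_{Q\in\mathcal{D}_k}\langle f\rangle_{r,Q}\langle g\rangle_{s',Q}|Q|
\lesssim \Big(\sum_k b_k\Big)\sup_{\mathcal{S}}\sum_{Q\in\mathcal{S}}\langle f\rangle_{r,Q}\langle g\rangle_{s',Q}|Q|.
\end{equation*}
For a single scale $k$, the collection $\mathcal{D}_k$ is pairwise disjoint and hence trivially sparse. So $\Lambda_k(f,g)\le \sup_{\mathcal{S}\ \text{sparse}}\Lambda_{\mathcal{S}}(f,g)$, and summing with weights $b_k$ gives a convex combination of sparse forms times $\sum_k b_k$.

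To get a single sparse collection I would use the standard observation that the supremum over sparse collections of the sparse form is attained, up to a constant, by one sparse family. This is done by the Lerner stopping-time construction: starting from a large cube containing the supports of $f$ and $g$, select maximal subcubes where $\langle f\rangle_{r}$ or $\langle g\rangle_{s'}$ exceeds a large multiple of its value on the parent cube. Any sparse form is dominated by the form built from this stopping family, with a constant depending only on $n$. Applied to each $\mathcal{D}_k$, this yields one collection $\mathcal{S}$ per grid, and the union over the $3^n$ grids remains sparse with a worse constant.

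The main obstacle is the domination of an arbitrary sparse (here single-scale) form by the stopping-time form. The fiddly part is checking that each cube $Q$ in $\mathcal{D}_k$ has a stopping ancestor $P$ with $\langle f\rangle_{r,Q}\lesssim\langle f\rangle_{r,P}$ and $\langle g\rangle_{s',Q}\lesssim\langle g\rangle_{s',P}$. Once that is verified, the contribution of the cubes $Q$ inside $P$ but not inside later stopping cubes must be summed to at most $|P|$ times the averages. This holds because, at a fixed scale $k$, these $Q$ are disjoint subsets of $P$.
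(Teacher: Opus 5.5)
The paper does not prove this proposition; it quotes it from \cite{Mukeshimana_Rule_2026}, so there is no in-text argument to compare with. Judged on its own terms, your proof is correct in substance and follows the natural route. Exact locality of $T^j(f\chi_{\frac{1}{3}Q})$ plus H\"older reduces each $T^j$ to $B_j$ times a single-scale form. A single stopping-time family $\mathcal{S}_0$ then dominates every single-scale form with a constant independent of the scale, which is exactly what your last paragraph verifies (the minimal stopping cube containing $Q$, and disjointness of same-scale cubes inside it). Hence $\sum_k b_k\Lambda_k\le C(\sum_k b_k)\Lambda_{\mathcal{S}_0}$.

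Three points need repair, none of them serious.

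(i) You state the one-third trick the wrong way round. What it provides, and what you need in order to compare averages, is a dyadic $P$ in one of the $3^n$ grids with $Q\subseteq P$ and $\ell(P)\le 6\ell(Q)$. This gives $\langle f\rangle_{r,Q}\langle g\rangle_{s',Q}|Q|\le 6^n\langle f\rangle_{r,P}\langle g\rangle_{s',P}|P|$. The property $\frac{1}{3}Q\subseteq P\subseteq Q$ fails for general positions of $Q$, and even when it holds it would not let you bound $\langle f\rangle_{r,Q}$ by $\langle f\rangle_{r,P}$.

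(ii) Since $\iota$ may be unbounded below, the cubes of $\mathcal{D}_k$ that strictly contain your top cube $P_0$ have no stopping ancestor. The simplest fix is to add the ancestors $A$ of $P_0$ to $\mathcal{S}_0$. Sparseness survives if you take $E(A)=A\setminus A'$, where $A'$ is the child of $A$ containing $P_0$. In the standard grid a compact set need not lie in a single cube, so you should allow up to $2^n$ top cubes. Relatedly, for $1/r+1/s'\le 1$ and nonzero $f,g$, the supremum of sparse forms over all sparse families is $+\infty$ (take an increasing chain of cubes). So the principle \emph{the supremum is attained by one family} should be replaced by uniform domination of the single-scale forms, which is all you actually use.

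(iii) That the union of the families from the $3^n$ grids is sparse is not immediate for non-nested collections. It follows from the sparse--Carleson equivalence in \cite{Hanninen}, which the paper also invokes.

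Finally, note a simplification for the paper's main use of this proposition, namely $\sum_j T^B_j$ in the proof of Theorem~\ref{thm:sparseform}. There $\iota$ is constant, and your first step already finishes the job. The cubes $Q$ whose thirds tile $\R^n$ form a sparse family with $E(Q)=\frac{1}{3}Q$ and $\eta=3^{-n}$. So $|\langle Tf,g\rangle|\le(\sum_j B_j)\sum_Q\langle f\rangle_{r,Q}\langle g\rangle_{s',Q}|Q|$ is already a sparse form bound, with no dyadic grids or stopping time needed.
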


We reproduce here $L^p$-boundedness results for Fourier integral operators that will be useful for us. The first was proved by Wu \& Yang and appears as Theorem~1.8 in \cite{WuYang}.
\begin{thm}\label{thm:YangWu}
    Let $T^{\varphi}_{a}$ be a Fourier integral operator defined as in \eqref{eq:fio} with an amplitude $a\in L^\infty S^m_\rho$ and phase function $\varphi \in L^\infty\Phi^2$ which satisfies \eqref{def:phaseZhuMa}. Then if $\rho\in[0,1]$, $p \in [2,\infty]$ and
    \begin{equation*}
        m < -\frac{n(1-\rho)}{2} - \frac{\rho(n-1)}{2}\left(1 - \frac{1}{p}\right)
    \end{equation*}
    then there exists a constant $C>0$ such that
\begin{equation*}
    \left\| T^{\varphi}_{a}(f) \right\|_{L^p} \leq C \left\| f \right\|_{L^p}.
\end{equation*}
\end{thm}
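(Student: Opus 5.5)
The plan is to interpolate between the two endpoints $p=2$ and $p=\infty$. Decompose $a=\sum_{j\ge0} a\,\psi_j$ using \eqref{eq:littpaldeco}, and write $T_j$ for the operator with amplitude $a\psi_j$. The low-frequency piece $T_0$ has an integrable kernel, uniformly in $x$, because $\varphi\in L^\infty\Phi^2$ is smooth away from $\xi=0$; it is therefore bounded on every $L^p$. For $j\ge1$ I would aim to show
\begin{equation*}
\|T_j\|_{L^\infty\to L^\infty}\lesssim 2^{j(m+\frac{n(1-\rho)}{2}+\frac{(n-1)\rho}{2})}, \qquad \|T_j\|_{L^2\to L^2}\lesssim 2^{j(m+\frac{n(1-\rho)}{2}+\varepsilon)}
\end{equation*}
for any $\varepsilon>0$. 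Riesz--Thorin then gives, for $2\le p\le\infty$, the bound $2^{j(m+\frac{n(1-\rho)}{2}+\frac{(n-1)\rho}{2}(1-\frac{2}{p})+\varepsilon')}$ for $T_j$ on $L^p$. This interpolated exponent is at least as good as the one in the statement, so the hypothesis on $m$ makes the sum over $j$ converge.

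For the $L^\infty$ estimate I need $\sup_x\int|K_j(x,y)|\,dy$. I would split $a\psi_j=\sum_\nu a\psi_j\eta^\nu_j$ using \eqref{eq:seegersoggestein}. On the cone $\Gamma^\nu_j$, write
\begin{equation*}
\varphi(x,\xi)=\nabla_\xi\varphi(x,\xi^\nu_j)\cdot\xi+h^\nu(x,\xi).
\end{equation*}
Homogeneity and $\varphi\in L^\infty\Phi^2$ give $|\partial_\xi^\alpha h^\nu|\lesssim 2^{-j\rho|\alpha|/2}$ in directions transverse to $\xi^\nu_j$, and decay of order $2^{-j|\alpha|}$ along $\xi^\nu_j$. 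Consequently the amplitude $a\psi_j\eta^\nu_j e^{ih^\nu}$ behaves like a symbol adapted to a box of dimensions $2^{j}\times(2^{j(1-\rho/2)})^{n-1}$. The amplitude derivatives cost $2^{-j\rho}$ in every direction, so the effective dual box in $y$ around $\nabla_\xi\varphi(x,\xi^\nu_j)$ has dimensions $2^{-j\rho}\times(2^{-j\rho/2})^{n-1}$.

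With this structure, I would apply Cauchy--Schwarz with the weight $(1+2^{2j\rho}|\langle z,\xi^\nu_j\rangle|^2+2^{j\rho}|z'|^2)^{N}$, where $z=y-\nabla_\xi\varphi(x,\xi^\nu_j)$ and $z'$ is its component orthogonal to $\xi^\nu_j$. Plancherel in $y$ combined with integration by parts then gives
\begin{equation*}
\int|K^\nu_j(x,y)|\,dy\lesssim 2^{jm}\bigl(2^{jn}2^{-j(n-1)\rho/2}\bigr)^{1/2}\bigl(2^{-j\rho}2^{-j(n-1)\rho/2}\bigr)^{1/2}.
\end{equation*}
There are $O(2^{j(n-1)\rho/2})$ values of $\nu$, and summing over them yields exactly $2^{j(m+n(1-\rho)/2+(n-1)\rho/2)}$. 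All bounds here are uniform in $x$, so roughness in $x$ causes no difficulty.

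The $L^2$ estimate is the main obstacle, since $x$ is only measurable and a $TT^*$ argument in $x$ gives no oscillation to exploit. Here I would use \eqref{def:phaseZhuMa}. I would decompose $\operatorname{supp}\psi_j$ further into pieces $\chi_k$ (cubes of side $2^{j\rho}$, intersected with the cones $\Gamma^\nu_j$ where necessary) on which $e^{i\varphi(x,\xi)}$ linearises to $e^{i\nabla_\xi\varphi(x,\xi_k)\cdot\xi}$ up to a harmless symbol. On each piece,
\begin{equation*}
|T_{j,k}f(x)|\lesssim 2^{jm}\,2^{jn(1-\rho)/2}\cdots\bigl(\mathcal{A}_k f_k\bigr)\bigl(\nabla_\xi\varphi(x,\xi_k)\bigr),
\end{equation*}
where $f_k=\chi_k(D)f$ and $\mathcal{A}_k$ is an average with a Schwartz tail at the scale $2^{-j\rho}$. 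Condition \eqref{def:phaseZhuMa} says that the push-forward of Lebesgue measure under $x\mapsto\nabla_\xi\varphi(x,\xi_k)$ has density at most $c^{-1}$ at every scale. Hence $\|(\mathcal{A}_kf_k)\circ\nabla_\xi\varphi(\cdot,\xi_k)\|_{L^2}\lesssim\|f_k\|_{L^2}$, and almost orthogonality of the $f_k$ in $\ell^2$ recombines the pieces. The delicate point is avoiding a loss from Cauchy--Schwarz over $k$ larger than $2^{j\varepsilon}$. If this fails for $\rho>1/2$ because the linearisation error $2^{j(2\rho-1)}$ is too large, I would first try the second-order Taylor expansion adapted to the Seeger--Sogge--Stein cones, which is where the $\Phi^2$ bound enters. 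The $\varepsilon$-loss, which is harmless because the inequality on $m$ is strict, would then absorb the remaining square-function losses.
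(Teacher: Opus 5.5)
The paper gives no proof of this statement; it quotes it as Theorem~1.8 of \cite{WuYang}. Judged on its own terms, your architecture is the right one: dyadic pieces $T_j$, an $L^2$ and an $L^\infty$ bound for each, Riesz--Thorin, then summation in $j$. Your $L^\infty$ computation is also correct.

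\textbf{The gap is the $L^2$ endpoint.} You aim for $\|T_j\|_{L^2\to L^2}\lesssim 2^{j(m+n(1-\rho)/2+\varepsilon)}$. The step meant to deliver it, ``almost orthogonality of the $f_k$ recombines the pieces'', does not work. Orthogonality of the inputs only gives $\sum_k\|f_k\|_2^2\lesssim\|f\|_2^2$; you would need almost orthogonality of the outputs $T_{j,k}f$ in $L^2_x$.

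These outputs have the form $(\mathcal A_kf_k)(\nabla_\xi\varphi(x,\xi_k))$, i.e.\ compositions with merely measurable maps. They therefore carry no frequency localisation in $x$. The only available recombination is Cauchy--Schwarz, which costs $(\#k)^{1/2}$. That is a power of $2^j$, not something a $2^{j\varepsilon}$ loss can absorb.

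Your bookkeeping also misplaces the factor $2^{jn(1-\rho)/2}$. For $\rho\le\frac12$ the phase linearises on cubes of side $2^{j\rho}$ with a symbol-type error. The kernel of a single cube piece is then an $L^1$-normalised bump at scale $2^{-j\rho}$ composed with $x\mapsto\nabla_\xi\varphi(x,\xi_k)$, which \eqref{def:phaseZhuMa} controls. So that piece has $L^2$ norm $\lesssim 2^{jm}$, and $2^{jn(1-\rho)/2}$ is exactly the Cauchy--Schwarz cost over the $\sim2^{jn(1-\rho)}$ cubes. Your $L^2$ claim therefore does hold for $\rho\le\frac12$, with no orthogonality needed.

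For $\rho>\frac12$ the cubes do not linearise the phase. The pieces that do (Seeger--Sogge--Stein cones, or boxes of size $2^{j\rho}\times(2^{j/2})^{n-1}$) incur a genuine extra power loss under Cauchy--Schwarz, and nothing in your sketch removes it. At $\rho=1$ you would be asserting $L^2$-boundedness for every $m<0$.

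\textbf{The repair is to aim lower.} The statement is calibrated precisely to this loss: at $p=2$ its hypothesis is $m<-\frac{n(1-\rho)}{2}-\frac{(n-1)\rho}{4}$, and $2^{j(n-1)\rho/4}$ is the square root of the number of cones.

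For each cone, apply the weighted Cauchy--Schwarz from your $L^\infty$ argument pointwise in $x$, then integrate in $x$. Set $z=\nabla_\xi\varphi(x,\xi^\nu_j)-y$, $z_1=\langle z,\xi^\nu_j\rangle$, $z'$ the component of $z$ orthogonal to $\xi^\nu_j$, and $f_\nu$ the frequency localisation of $f$ to an enlarged cone. Then
\begin{equation*}
\|T^\nu_jf\|_{L^2}^2\lesssim 2^{2jm}\,2^{jn}2^{-j(n-1)\rho/2}\,\sup_y\int\bigl(1+2^{2j\rho}|z_1|^2+2^{j\rho}|z'|^2\bigr)^{-2N}dx\;\|f_\nu\|_{L^2}^2 .
\end{equation*}
By \eqref{def:phaseZhuMa} the $x$-integral is $\lesssim 2^{-j\rho}2^{-j(n-1)\rho/2}$, so $\|T^\nu_j\|_{L^2\to L^2}\lesssim 2^{j(m+n(1-\rho)/2)}$. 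Cauchy--Schwarz over the $O(2^{j(n-1)\rho/2})$ cones, together with $\sum_\nu\|f_\nu\|_2^2\lesssim\|f\|_2^2$, gives
\begin{equation*}
\|T_j\|_{L^2\to L^2}\lesssim 2^{j(m+n(1-\rho)/2+(n-1)\rho/4)}.
\end{equation*}
Interpolating with your $L^\infty$ bound gives the exponent
\begin{equation*}
m+\tfrac{n(1-\rho)}{2}+\tfrac{2}{p}\cdot\tfrac{(n-1)\rho}{4}+\bigl(1-\tfrac{2}{p}\bigr)\tfrac{(n-1)\rho}{2}=m+\tfrac{n(1-\rho)}{2}+\tfrac{(n-1)\rho}{2}\bigl(1-\tfrac1p\bigr),
\end{equation*}
which is exactly the statement.

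\textbf{A smaller point concerns $T_0$.} Uniform integrability of its kernel in $y$ gives only the $L^\infty$ bound. The $L^2$ bound also needs the $x$-integration, which again comes from \eqref{def:phaseZhuMa}. This is because, for $\varphi\in L^\infty\Phi^2$, the kernel is concentrated near $\nabla_\xi\varphi(x,\omega)$ with $|\omega|=1$, not near $x$.
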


The second result is a combination of Theorem~1.7 in \cite{SindayigayaLp}, proved by Sindayigaya, and Theorem~1.1 in \cite{SindayigayaWuHuang}, proved by the Sindayigaya, Wu \& Huang.

\begin{thm}\label{thm:Sindayigaya}
    Let $T^{\varphi}_{a}$ be a Fourier integral operator defined as in \eqref{eq:fio} with an amplitude $a\in L^\infty S^m_\rho$ and phase function $\varphi \in L^\infty\Phi^2$ which satisfies \eqref{def:phaseZhuMa}. Then if $\rho\in[0,\frac{1}{2}]$ and
    \begin{equation*}
        m < -n(\rho-1),
    \end{equation*}
    or $\rho\in[\frac{1}{2},1]$ and
    \begin{equation*}
        m < \rho -\frac{n+1}{2},
    \end{equation*}
    then there exists a constant $C>0$ such that
\begin{equation*}
    \left\| T^{\varphi}_{a}(f) \right\|_{L^1} \leq C \left\| f \right\|_{L^1}.
\end{equation*}
\end{thm}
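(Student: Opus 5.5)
Since this statement combines Theorem~1.7 of \cite{SindayigayaLp} with Theorem~1.1 of \cite{SindayigayaWuHuang}, the shortest proof is to check that our hypotheses match theirs and quote them. I would still reconstruct the argument, as follows. $L^1$ boundedness is equivalent to $\sup_y \int |K(x,y)|\,dx < \infty$, where
\begin{equation*}
K(x,y)=(2\pi)^{-n}\int a(x,\xi)e^{i(\varphi(x,\xi)-y\cdot\xi)}\,d\xi .
\end{equation*}
Because $a$ and $\varphi$ are only measurable in $x$, every integration by parts must be in $\xi$. Integrability in $x$ can then only come from condition \eqref{def:phaseZhuMa}, which gives $|\{x : |\nabla_\xi\varphi(x,\xi)-y|\le t\}|\lesssim t^n$. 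I would decompose $T^\varphi_a=\sum_j T_j$ with the Littlewood--Paley partition \eqref{eq:littpaldeco}. The low-frequency piece $j=0$ is handled by integrating by parts in $\xi$, which gives $|K_0(x,y)|\lesssim (1+|\nabla_\xi\varphi(x,\xi)-y|)^{-N}$ in averaged form. Combined with \eqref{eq:seegersoggestein} and a layer-cake argument through \eqref{def:phaseZhuMa}, this yields a uniform $L^1_x$ bound.

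\textbf{Case $\rho\in[\tfrac12,1]$.} For $j\ge1$ I would apply the Seeger--Sogge--Stein splitting \eqref{eq:seegersoggestein} and write
\begin{equation*}
\varphi(x,\xi)=\nabla_\xi\varphi(x,\xi^\nu_j)\cdot\xi+h^\nu_j(x,\xi).
\end{equation*}
Using $\varphi\in L^\infty\Phi^2$ and homogeneity, $h^\nu_j$ obeys symbol-type bounds on $\Gamma^\nu_j\cap\{|\xi|\sim2^j\}$. Next I would integrate by parts with
\begin{equation*}
L=1-2^{2j\rho}\partial^2_{\xi_1}-2^{j}\Delta_{\xi'},
\end{equation*}
taking coordinates with $\xi_1$ along $\xi^\nu_j$. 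The radial scale $2^{-j\rho}$ is dictated by the amplitude, and the angular scale by the cutoffs $\eta^\nu_j$ and by $\partial^2 h$. This gives
\begin{equation*}
|K^\nu_j(x,y)|\lesssim 2^{jm}|\operatorname{supp}_\xi|\,\bigl(1+2^{j\rho}|z_1|+2^{j/2}|z'|\bigr)^{-N},\qquad z=\nabla_\xi\varphi(x,\xi^\nu_j)-y.
\end{equation*}
To integrate in $x$, I would cover the dyadic dilates of the resulting anisotropic rectangle by balls of radius equal to its shortest side. Applying \eqref{eq:phaseZhuMa}-type counting to each ball bounds the $x$-measure by (number of balls)$\times$(radius)$^n$, which is lossy compared with the rectangle's true volume. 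Summing over the $O(2^{j(n-1)\rho/2})$ values of $\nu$ and checking the exponent of $2^j$, I expect to recover exactly the threshold $m<\rho-\frac{n+1}{2}$, with geometric summability in $j$.

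\textbf{Case $\rho\le\tfrac12$.} Here the angular decomposition is not useful. I would instead integrate by parts isotropically at scale $2^{-j\rho}$, with the non-smooth part of the phase absorbed into the amplitude class. This gives $|K_j|\lesssim 2^{j(m+n)}(1+2^{j\rho}|\nabla_\xi\varphi(x,\xi_0)-y|)^{-N}$ modulo error terms, and \eqref{def:phaseZhuMa} then gives $\int|K_j|\,dx\lesssim 2^{j(m+n)}2^{-jn\rho}$, which sums when $m<-n(1-\rho)$. (I read the stated condition $-n(\rho-1)$ as this.)

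The main obstacle is the $x$-integration. Condition \eqref{def:phaseZhuMa} controls only balls, not the anisotropic rectangles on which the kernel actually concentrates. The covering loss therefore has to be balanced against the gain from integrating by parts, and this trade-off is exactly what produces the kink at $\rho=\tfrac12$. I would first try to do this bookkeeping directly; if it loses too much, I would fall back on quoting the two cited theorems verbatim.
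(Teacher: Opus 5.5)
Your fallback, verifying the hypotheses and quoting Theorem~1.7 of \cite{SindayigayaLp} and Theorem~1.1 of \cite{SindayigayaWuHuang}, is all the paper does. You are also right to read the first condition as $m<-n(1-\rho)$: as printed, $-n(\rho-1)>0$, which is incompatible with the second condition at $\rho=\frac12$.

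The reconstruction, however, breaks down for $\rho\le\frac12$. Without angular localisation, the factor $e^{i(\varphi(x,\xi)-\nabla_\xi\varphi(x,\xi_0)\cdot\xi)}$ has $\xi$-gradient $\nabla_\xi\varphi(x,\xi)-\nabla_\xi\varphi(x,\xi_0)$ of size $O(1)$, not $O(2^{-j\rho})$. It therefore cannot be absorbed into an amplitude at scale $2^{-j\rho}$, and your bound $|K_j|\lesssim 2^{j(m+n)}(1+2^{j\rho}|\nabla_\xi\varphi(x,\xi_0)-y|)^{-N}$ is false for $\rho>0$. Take $\varphi=x\cdot\xi+|\xi|$ and $a=(1+|\xi|^2)^{m/2}$, which satisfy all the hypotheses. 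Then $|K_j(x,y)|\approx 2^{j(m+\frac{n+1}{2})}$ on the whole shell $\bigl||x-y|-1\bigr|\lesssim 2^{-j}$. This includes the point $x-y=\xi_0$, where your bound is only $2^{j(m+n)}(1+2^{j\rho+1})^{-N}$.

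What works is a decomposition into cones of aperture $2^{-j\rho}$, rather than no angular decomposition at all. On such a cone:
\begin{itemize}
\item $|\nabla_\xi h^\nu_j|\lesssim 2^{-j\rho}$;
\item $|\partial^\alpha_\xi h^\nu_j|\lesssim 2^{j(1-|\alpha|)}\le 2^{-j\rho|\alpha|}$ for $|\alpha|\ge2$, precisely because $\rho\le\frac12$;
\item the cutoffs cost $2^{-j(1-\rho)}\le2^{-j\rho}$ per derivative.
\end{itemize}
Each piece then decays isotropically at scale $2^{-j\rho}$ about $\nabla_\xi\varphi(x,\xi^\nu_j)$, so \eqref{def:phaseZhuMa} applies with no covering loss. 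Summing $2^{j(m+n-\rho(n-1))}2^{-jn\rho}$ over $O(2^{j\rho(n-1)})$ cones gives $2^{j(m+n(1-\rho))}$. This is numerically your exponent, but now with a valid kernel estimate behind it.

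For $\rho\ge\frac12$ there is an internal inconsistency. Your weight $1+2^{j\rho}|z_1|+2^{j/2}|z'|$, i.e.\ the term $2^j\Delta_{\xi'}$ in $L$, is justified only on the classical Seeger--Sogge--Stein cones of aperture $2^{-j/2}$, of which there are $O(2^{j(n-1)/2})$. It does not hold on the cones of \eqref{eq:seegersoggestein}, which have aperture $2^{-j\rho/2}$. There the \emph{first} derivative $\nabla_{\xi'}h^\nu_j$ already has size $2^{-j\rho/2}$ (you accounted for $\partial^2 h$ but not $\nabla h$). So you only get decay in $2^{j\rho/2}|z'|$, which is the paper's weight $g^\nu_j$. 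The count then gives $m<-n(1-\rho)-\frac{(n-1)\rho}{2}$, worse by $\frac{(n-1)(1-\rho)}{2}$.

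With aperture $2^{-j/2}$, your covering argument closes. Here $\rho\ge\frac12$ is what ensures the amplitude's angular derivative cost $2^{-j\rho}$ is no worse than $2^{-j/2}$. One gets
\begin{equation*}
\int|K^\nu_j|\,dx\lesssim 2^{jm}\,2^{j\frac{n+1}{2}}\,2^{j(\rho-\frac12)(n-1)}\,2^{-j\rho n},
\end{equation*}
and multiplying by the $2^{j(n-1)/2}$ cones gives $2^{j(m-\rho+\frac{n+1}{2})}$, exactly the stated threshold. The switch of optimal aperture from $2^{-j\rho}$ to $2^{-j/2}$ is what produces the kink at $\rho=\frac12$.

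Lastly, for $K_0$ you cannot get decay of arbitrary order $N$, because $\varphi$ is singular at $\xi=0$. Only decay of order $n+\mu$ with $\mu<1$ in $|\nabla_\xi\varphi(x,\omega)-y|$, for a fixed unit vector $\omega$, is available. This still suffices together with \eqref{def:phaseZhuMa}.
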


\section{Pointwise Estimates}\label{sec:pointwise_estimates}

We make use of the Littlewood-Paley partition \eqref{eq:littpaldeco} to decompose the operator as
\begin{equation}\label{eq:decompose}
    T^{\varphi}_{a} = T_0 + \sum_{j=1}^\infty T_j,
\end{equation}
where
\begin{equation}\label{eq:Tj}
T_jf(x):=\frac{1}{(2\pi)^n}\int_{\R^n}\psi_j(\xi)a(x,\xi)e^{i\varphi(x,\xi)}\hat{f}(\xi)d\xi,
\end{equation}
for non-negative integers $j$. We will deal with these terms in three different regimes. First, we can deal with the low frequency part $T_0$ in its entirety. Then, we deal with the high frequency parts $T_j$ for $j > 0$ by splitting each one into spatially localised and non-localised parts. More precisely, if we define the kernel
\begin{equation*}
    K_j(x,z) = \frac{1}{(2\pi)^n}\int_{\R^n} e^{i\theta(x,\xi) + iz\cdot\xi}\psi_j(\xi)a(x,\xi)d\xi
\end{equation*}
where $\theta(x,\xi) = \varphi(x,\xi) - x\cdot\xi$, then $|\nabla_\xi \theta(x,\xi)| \lesssim 1$ and we can decompose
\begin{equation}\label{eq:aandb}
\begin{aligned}
    T_jf(x) &= \int_{|z| > 1 + 2\|\nabla_\xi\theta\|_{L^\infty}} K_j(x,z)f(x-z) dz 
    + \int_{|z| \leq 1 + 2\|\nabla_\xi\theta\|_{L^\infty}} K_j(x,z)f(x-z) dz \\
    &=: T_j^Af(x) + T_j^Bf(x)
\end{aligned}
\end{equation}
\subsection{Low frequency part}\label{sec:lowfreq}
In this section we deal with $T_0$, the low-frequency part of the operator. The estimate
\begin{equation}\label{ineq:lowfreq}
    T_0f(x) \lesssim M(f)(x)
\end{equation}
is already contained in the proof of Proposition~3.6 in \cite{DSFS}.\footnote{Here we are using that $\varphi(x,\xi) - x\cdot\xi \in L^\infty\Phi^1$ rather than $L^\infty\Phi^2$. An alternative assumption would be to restrict $x$ to a compact set.} This also gives a sparse bound, but we can also provide a direct proof by making use of the Littlewood-Paley partition, but in the spatial rather than the frequency variables. Denoting the kernel of $T_0$ by
\begin{equation*}
    K_0(x,z) = \frac{1}{(2\pi)^n}\int_{\R^n}e^{i\theta(x,\xi)-iz\cdot\xi}\psi_0(\xi)a(x,\xi)d\xi,
\end{equation*}
we decompose
\begin{equation*}
    K_0(x,z) = \sum_{\ell=0}^\infty K_0(x,z)\psi_\ell(z) = \sum_{\ell=0}^\infty K_0^\ell(x,z),
\end{equation*}
where $K_0^\ell(x,z) := K_0(x,z)\psi_\ell(z)$. Again, the proof of Proposition~3.6 in \cite{DSFS} gives us that
\begin{equation*}
    |K_0^\ell(x,z)| \lesssim (1+|z|)^{-n-\mu}\psi_j(z)
\end{equation*}
for any $\mu \in [0,1)$, so $|K_0^0(x,z)| \lesssim \psi_0(z)$ and
\begin{equation*}
    |K_0^\ell(x,z)| \lesssim 2^{-\ell(n+\mu)}\chi_{B(0,4\cdot 2^{\ell})}(z)
\end{equation*}
for positive integers $\ell$. Denoting
\begin{equation*}
    T^\ell_0(f) = \int K_0^\ell(x,x-y) f(y) dy
\end{equation*}
and taking $Q$ to be a cube of side length $2^{\ell+2}$, we calculate that
\begin{align*}
    |T^\ell_0(f\chi_Q)(x)| &= \left|\int K_0^\ell(x,x-y) f(y)\chi_Q(y) dy\right| \leq \int 2^{-\ell(n+\mu)}\chi_{B(x,4\cdot 2^{\ell})}(x-y)\chi_Q(y) |f(y)| dy \\
    &\lesssim 2^{-\ell(n+\mu)} \int_{3Q} |f(y)| dy \lesssim 2^{-\ell\mu}\left\langle f \right\rangle_{1,3Q}
\end{align*}
and $\supp T^\ell_0(f\chi_Q) \subseteq 3Q$. Thus, via Proposition~\ref{intro:pwsparse} we can obtain a pointwise sparse bound \eqref{def:pwsparse} with $r=1$ for $T=T_0=\sum_\ell T_0^\ell$.

\subsection{Spatially non-localised high frequency part}\label{sec:spatialnonloc}

Just as for $T_0$, the pointwise estimate of $T_j^A$ by the maximal function was also dealt with in \cite{DSFS}. Indeed, in our notation, estimate (3.16) therein reads
\begin{equation}\label{ineq:nonlochigh}
    |T_j^A(f)(x)| \lesssim 2^{j(m+n-N\rho)}M(f)(x)
\end{equation}
for any chosen $N > 0$, and the kernel estimate which precedes (3.16) in \cite{DSFS} also yields sparse bounds for $T = \sum_j T^A_j$ in a similar manner to Section~\ref{sec:lowfreq}.

\subsection{Spatially localised high frequency part}\label{sec:spatiallylocalised}

To deal with $T_j^B$, we make the change of variables $\xi \mapsto 2^{j\rho}\xi$, and use Euler's formula
\begin{equation}\label{eq:euler}
    \theta(x,\xi) = \nabla_\xi \theta(x,\xi)\cdot\xi
\end{equation}
for the homogeneous function $\theta$ and \eqref{eq:seegersoggestein} to obtain
\begin{equation}\label{eq:sumbnuj}
\begin{aligned}
    K_j(x,z) &= \frac{2^{jn\rho}}{(2\pi)^n}\int_{\R^n} e^{i2^{j\rho}(\nabla_\xi\theta(x,\xi) + z)\cdot\xi}\psi(2^{j(\rho-1) + 1}\xi)a(x,2^{j\rho}\xi)d\xi \\
    &= \sum_\nu \frac{2^{jn\rho}}{(2\pi)^n}\int_{\R^n} e^{i2^{j\rho}(\nabla_\xi\theta(x,\xi) + z)\cdot\xi}\psi(2^{j(\rho-1) + 1}\xi)\eta^\nu_j(\xi)a(x,2^{j\rho}\xi)d\xi \\
    &= \sum_\nu \frac{2^{jn\rho}}{(2\pi)^n}\int_{\R^n} e^{i2^{j\rho}(\nabla_\xi\theta(x,\xi_j^\nu) + z)\cdot\xi}b_j^\nu(x,\xi) d\xi \\
    &= \sum_\nu K_j^\nu(x,z),
\end{aligned}
\end{equation}
where
\begin{equation}\label{eq:bnuj}
    b_j^\nu(x,\xi) := e^{i2^{j\rho}(\nabla_\xi\theta(x,\xi) - \nabla_\xi\theta(x,\xi_j^\nu))\cdot\xi}\psi(2^{j(\rho-1) + 1}\xi)\eta^\nu_j(\xi)a(x,2^{j\rho}\xi)
\end{equation}
and
\begin{equation}\label{eq:Knuj}
    K_j^\nu(x,z) := \frac{2^{jn\rho}}{(2\pi)^n}\int_{\R^n} e^{i2^{j\rho}(\nabla_\xi\theta(x,\xi_j^\nu) + z)\cdot\xi}b_j^\nu(x,\xi) d\xi.
\end{equation}

\begin{lem}\label{lem:bnuj}
    When $a \in L^\infty S^m_\rho$, $0\leq\rho\leq1$ and $\theta(x,\xi) \in L^\infty \Phi^2$ is positively homogeneous of degree one in the $\xi$-variable, then for $b^\nu_j$ defined in \eqref{eq:bnuj} and for a coordinate system such that $\xi^\nu_j = (1,0\dots,0)$, we have that
    \begin{equation*}
        |\partial_\xi^\alpha b_j^\nu(x,\xi)| \lesssim 2^{jm}2^{|\alpha'|\rho/2},
    \end{equation*}
    where $\alpha = (\alpha_1,\alpha')$ and $\alpha_1$ is the first coordinate of $\alpha$.
\end{lem}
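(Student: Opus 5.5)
The plan is to write $b_j^\nu$ as a product of three factors and apply the Leibniz rule. Set
\begin{equation*}
    h(x,\xi) := 2^{j\rho}\bigl(\nabla_\xi\theta(x,\xi) - \nabla_\xi\theta(x,\xi_j^\nu)\bigr)\cdot\xi = 2^{j\rho}\bigl(\theta(x,\xi) - \nabla_\xi\theta(x,\xi^\nu_j)\cdot\xi\bigr),
\end{equation*}
where the second equality is Euler's formula \eqref{eq:euler}. The factors are then $e^{ih}$, the cut-off $\psi(2^{j(\rho-1)+1}\xi)\eta^\nu_j(\xi)$, and $a(x,2^{j\rho}\xi)$. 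On the support of $b^\nu_j$ we have $|\xi|\sim 2^{j(1-\rho)}$ and $\xi/|\xi|$ lies within $2\cdot 2^{-j\rho/2}$ of $\xi^\nu_j=e_1$. Hence $|\xi'|\lesssim 2^{j(1-\rho)}2^{-j\rho/2}$. It therefore suffices to show that each factor obeys $|\partial^\alpha_\xi(\cdot)|\lesssim C\,2^{|\alpha'|\rho/2}$, with constants $C$ whose product is $2^{jm}$. (Here I read the claimed bound $2^{|\alpha'|\rho/2}$ as $2^{j|\alpha'|\rho/2}$, as is presumably intended.) The Leibniz rule then gives the lemma.

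\textbf{The amplitude and cut-offs.} By Definition~\ref{def:amplitude},
\begin{equation*}
    |\partial^\alpha_\xi [a(x,2^{j\rho}\xi)]| \lesssim 2^{j\rho|\alpha|}(2^{j\rho}|\xi|)^{m-\rho|\alpha|}\sim 2^{j\rho|\alpha|}2^{j(m-\rho|\alpha|)} = 2^{jm},
\end{equation*}
which is better than required. For $\psi(2^{j(\rho-1)+1}\xi)$ each derivative costs $2^{j(\rho-1)}\le 1$. For $\eta^\nu_j$ we use the stated bound $2^{j\rho|\alpha|/2}$, but this is too crude in the $\xi_1$ direction. Because $\eta^\nu_j$ is homogeneous of degree zero, a $\xi_1$ derivative (essentially radial inside the cone) instead costs $|\xi|^{-1}2^{j\rho/2}|\xi'|/|\xi| \lesssim |\xi|^{-1}\lesssim 1$. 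I would make this precise by writing $\eta^\nu_j(\xi)=g(2^{j\rho/2}\xi'/\xi_1)$ up to harmless smooth factors and differentiating.

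\textbf{The phase factor (main obstacle).} Everything reduces to showing $|\partial^\alpha_\xi h|\lesssim 2^{j\rho|\alpha'|/2}$ for $|\alpha|\ge1$ on the support. Once that holds, the chain rule (Faà di Bruno) bounds derivatives of $e^{ih}$ by products of such terms, and the exponents add up to $|\alpha'|$. The argument runs as follows.

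\emph{Size of $h$ and first derivatives.} Note $h(x,\xi_j^\nu)=0$ and $\nabla_\xi h(x,\xi^\nu_j)=0$, and $\nabla_\xi h$ is homogeneous of degree zero. Since $\theta\in L^\infty\Phi^2$, Taylor's formula on the sphere gives
\begin{equation*}
    |\nabla_\xi h(x,\xi)|\lesssim 2^{j\rho}\left|\frac{\xi}{|\xi|}-e_1\right|\lesssim 2^{j\rho/2}.
\end{equation*}
Moreover, $\partial_{\xi_1}h$ is smaller. At $\xi=te_1$, homogeneity gives $\partial_{\xi_1}\nabla_\xi\theta=0$, so the first-order Taylor term vanishes in this direction. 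This yields $|\partial_{\xi_1}h|\lesssim 2^{j\rho}|\xi'|^2/|\xi|^2\lesssim 1$.

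\emph{Higher derivatives.} For $|\alpha|\ge2$ we have $\partial^\alpha h = 2^{j\rho}\partial^\alpha\theta$, and $|\partial^\alpha\theta|\lesssim|\xi|^{1-|\alpha|}\le 1$ since $|\xi|\gtrsim 1$. This bound alone is too large. I would again use homogeneity: $\partial^\alpha\theta$ is homogeneous of degree $1-|\alpha|$ and is annihilated by the radial derivative $\xi\cdot\nabla$ up to a factor $(1-|\alpha|)$. Each $\xi_1$ derivative can therefore be traded for a factor $|\xi|^{-1}$ plus terms involving $\xi'$ that gain $2^{-j\rho/2}$.

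\emph{Closing the estimate.} Combining with $|\xi|\sim 2^{j(1-\rho)}$, I expect to get $|\partial^\alpha h|\lesssim 2^{j\rho}|\xi|^{1-|\alpha|}$, and to show this is at most $2^{j\rho|\alpha'|/2}$. The case where all derivatives are $\xi'$ derivatives and $|\alpha'|\ge2$ is immediate, since $2^{j\rho}\le 2^{j\rho|\alpha'|/2}$. The case that needs the most care is when $\alpha_1\ge1$, where the gain must come from the $\xi_1$ derivative. My first attempt there would be to differentiate the identity $\xi\cdot\nabla_\xi\partial^{\alpha-e_1}\theta=(2-|\alpha|)\partial^{\alpha-e_1}\theta$ and induct on $\alpha_1$.
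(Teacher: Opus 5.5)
Your proposal is correct, and its engine is the same as the paper's. You bound each factor crudely by $2^{j\rho|\alpha|/2}$. You then use Euler's identity $\xi_1\partial_{\xi_1}g=kg-\xi'\cdot\nabla_{\xi'}g$ for $g$ homogeneous of degree $k$, together with $\xi_1\sim|\xi|\sim2^{j(1-\rho)}$ and $|\xi'|/\xi_1\lesssim2^{-j\rho/2}$ on the support, to make $\xi_1$-derivatives free.

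What differs is where you apply the identity, and your choice is the sounder one. The paper runs the homogeneity argument on $\partial^{\alpha'}_\xi\eta^\nu_j$ and on $\partial^{\alpha'}_\xi e^{ih}$. The latter is not homogeneous (only $h$ is), so that step of the paper needs repair as written. You instead prove $|\partial^\gamma_\xi h|\lesssim2^{j\rho|\gamma'|/2}$ for all $|\gamma|\ge1$ and pass to $e^{ih}$ by Fa\`a di Bruno. This works because the $|\gamma_k'|$ add up to $|\alpha'|$. Your parametrisation $\eta^\nu_j=g(2^{j\rho/2}\xi'/\xi_1)$ is an equally good substitute for the paper's treatment of the cut-off. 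Your amplitude bound $2^{jm}$ is also the correct one: the paper's extra factor $2^{j|\alpha|(\rho-1)}$ there would require $S^m_1$-type decay, though it is never used.

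One thing to correct in your write-up: the hope that $2^{j\rho}|\xi|^{1-|\alpha|}\le2^{j\rho|\alpha'|/2}$ fails once $\alpha_1\ge1$ and $\rho$ is large. For $\alpha=2e_1$ it would assert $2^{j(2\rho-1)}\lesssim1$, which is false for $\rho>1/2$. So the induction on $\alpha_1$ is indispensable, not a refinement.

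The induction does close:
\begin{itemize}
\item At level $\alpha_1=0$, your crude bound gives the estimate.
\item For $\alpha=e_1$, your second-order Taylor bound $|\partial_{\xi_1}h|\lesssim1$ gives it.
\item For $|\alpha|\ge2$ with $\alpha_1\ge1$, use $\xi_1\partial^\alpha_\xi h=(2-|\alpha|)\partial^{\alpha-e_1}_\xi h-\xi'\cdot\nabla_{\xi'}\partial^{\alpha-e_1}_\xi h$. The bounds at level $\alpha_1-1$ for $\partial^{\alpha-e_1}_\xi h$ and $\partial^{\alpha-e_1+e_k}_\xi h$ ($k\ge2$) make the right-hand side $\lesssim2^{j(1-\rho)}2^{j\rho|\alpha'|/2}$. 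Dividing by $\xi_1\sim2^{j(1-\rho)}$ gives the claim.
\end{itemize}
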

\begin{proof}
    We examine in turn derivatives of each factor in \eqref{eq:bnuj}. First, we have that
    \begin{equation}\label{ineq:psi}
        |\partial^\alpha_\xi\psi(2^{j(\rho-1) + 1}\xi)| \lesssim 2^{j|\alpha|(\rho-1)}|(\partial^\alpha\psi)(2^{j(\rho-1) + 1}\xi)| \lesssim 2^{j|\alpha|(\rho-1)}.
    \end{equation}
    Secondly, since $|\xi| \simeq 2^{j(1-\rho)}$ on the $\xi$-support of $\psi(2^{j(\rho-1) + 1}\xi)$, we see that $|2^{j\rho}\xi| \simeq 2^{j}$ on the $\xi$-support of $b_j^\nu(x,\xi)$ and
    \begin{equation}\label{ineq:a}
        |\partial^\alpha_\xi a(x,2^{j\rho}\xi)| \lesssim 2^{j|\alpha|\rho}|2^{j\rho}\xi|^{m-|\alpha|}
        \lesssim 2^{j|\alpha|\rho}2^{j(m-|\alpha|)} = 2^{jm}2^{j|\alpha|(\rho-1)}.
    \end{equation}
    Now, to estimate $\eta^\nu_j$ we set
    \begin{equation*}
        F(x) = \frac{\phi\left(x - 2^{j\rho/2}\xi^{\nu}_j\right)}{\sum_{\tilde{\nu}} \phi\left(x -2^{j\rho/2}\xi^{\tilde{\nu}}_j\right)}
    \end{equation*}
    and observe that $F$ is a smooth function on $\R^n\setminus\{0\}$ and $\eta^\nu_j(\xi) = F(2^{j\rho/2}\xi/|\xi|)$. The chain and product rules give that
    \begin{equation}\label{eq:etarepresent}
    \begin{aligned}
        &\partial^\alpha_\xi \eta^\nu_j(\xi) \\
        &= \sum_{\substack{\beta + \gamma_1+\dots+\gamma_u = \alpha\\
                  |\beta| = u}} 2^{ju\rho/2}\sum_{\ell_1,\dots,\ell_u = 1}^n\left(\frac{\partial}{\partial x_{\ell_1}}\dots\frac{\partial}{\partial x_{\ell_u}}  F\left(2^{j\rho/2}\frac{\xi}{|\xi|}\right)\right)\prod_{k=1}^u \left(\partial^{\gamma_k}_\xi\frac{\partial}{\partial \xi_{\tau(k)}}\left(\frac{\xi_{\ell_k}}{|\xi|}\right)\right),
    \end{aligned}
    \end{equation}
    where the sum is taken over integers $u$ and multi-indices $\gamma_1, \dots, \gamma_u$ and $\beta$ such that $\beta + \gamma_1+\dots+\gamma_u = \alpha$ and $|\beta| = u$. Here, $\tau(k) \in \{1,\dots,n\}$ are indices such that
    \begin{equation*}
        \partial^\beta_\xi
        = \frac{\partial}{\partial \xi_{\tau(1)}}\frac{\partial}{\partial \xi_{\tau(2)}}\dots\frac{\partial}{\partial \xi_{\tau(u)}}.
    \end{equation*}
    We can use \eqref{eq:etarepresent} and the fact $|\xi| \simeq 2^{j(1-\rho)}$ to see that
    \begin{equation}\label{ineq:eta}
    \begin{aligned}
        |\partial^\alpha_\xi \eta^\nu_j(\xi)| 
        &\lesssim \sum_{\substack{\beta + \gamma_1+\dots+\gamma_u = \alpha\\
                  |\beta| = u}} 2^{ju\rho/2}\sum_{\ell_1,\dots,\ell_u = 1}^n\prod_{k=1}^u 2^{j(|\gamma_k|+1)(\rho-1)} \\
        &\lesssim \sum_{\substack{\beta + \gamma_1+\dots+\gamma_u = \alpha\\
                  |\beta| = u}} 2^{ju\rho/2}\sum_{\ell_1,\dots,\ell_u = 1}^n 2^{j|\alpha|(\rho-1)} \\
        &\lesssim 2^{j|\alpha|\rho/2}2^{j|\alpha|(\rho-1)}.
    \end{aligned}
    \end{equation}
    Finally, to estimate $\partial^\alpha_\xi e^{i2^{j\rho}(\nabla_\xi\theta(x,\xi) - \nabla_\xi\theta(x,\xi_j^\nu))\cdot\xi}$, we first note that because $\theta \in L^\infty \Phi^2$, we have
    \begin{equation}\label{ineq:higherderivativesphase}
        |2^{j\rho}\partial^\gamma_\xi\nabla_\xi\theta(x,\xi)| \leq 2^{j\rho}|\xi|^{-|\gamma|} \leq 2^{j\rho}2^{j|\gamma|(\rho-1)}
    \end{equation}
    for all multi-indices $\gamma \neq 0$. Moveover, differentiating \eqref{eq:euler} gives that
    \begin{equation} \label{eq:zero}
        (\nabla_\xi\partial_{\xi_k}\theta)(x,\xi) \cdot \xi = 0
    \end{equation}
    for each $k=1,\dots,n$, so
    \begin{equation}\label{eq:phasederivative}
        \nabla_\xi(i2^{j\rho}(\nabla_\xi\theta(x,\xi) - \nabla_\xi\theta(x,\xi_j^\nu))\cdot\xi) = i2^{j\rho}(\nabla_\xi\theta(x,\xi) - \nabla_\xi\theta(x,\xi_j^\nu)).
    \end{equation}
    Thus, the homogeneity of $\theta$ and the Mean-Value Theorem give that
    \begin{equation}\label{ineq:firstderivativephase}
        |2^{j\rho}(\nabla_\xi\theta(x,\xi) - \nabla_\xi\theta(x,\xi_j^\nu))|
        = 2^{j\rho}|(\nabla_\xi\theta(x,\xi/|\xi|) - \nabla_\xi\theta(x,\xi_j^\nu))|
        \lesssim 2^{j\rho}\left|\frac{\xi}{|\xi|} - \xi^\nu_j \right| \lesssim 2^{j\rho/2}.
    \end{equation}
    Similarly to \eqref{eq:etarepresent}, \eqref{eq:phasederivative} gives that
    \begin{equation}\label{eq:exprepresent}
    \begin{aligned}
        &\partial^\alpha_\xi \left(e^{i2^{j\rho}(\nabla_\xi\theta(x,\xi) - \nabla_\xi\theta(x,\xi_j^\nu))\cdot\xi}\right) = \\
        &\sum_{\substack{\beta + \gamma_1+\dots+\gamma_u = \alpha\\
                  |\beta| = u}} e^{i2^{j\rho}(\nabla_\xi\theta(x,\xi) - \nabla_\xi\theta(x,\xi_j^\nu))\cdot\xi}\prod_{k=1}^u \left(i2^{j\rho}\partial^{\gamma_k}_\xi\left(\left(\frac{\partial\theta}{\partial\xi_{\tau(k)}}\right)(x,\xi) - \left(\frac{\partial\theta}{\partial\xi_{\tau(k)}}\right)(x,\xi_j^\nu)\right)\right),
    \end{aligned}
    \end{equation}
    where the sum is taken over integers $u$ and multi-indices $\gamma_1, \dots, \gamma_u$ and $\beta$ such that $\beta + \gamma_1+\dots+\gamma_u = \alpha$ and $|\beta| = u$. Just as in \eqref{eq:etarepresent}, $\tau(k) \in \{1,\dots,n\}$ are indices such that
    \begin{equation*}
        \prod_{k=1}^u \left(\left(\frac{\partial\theta}{\partial\xi_{\tau(k)}}\right)(x,\xi) - \left(\frac{\partial\theta}{\partial\xi_{\tau(k)}}\right)(x,\xi_j^\nu)\right)
        = \left(\nabla_\xi\theta(x,\xi) - \nabla_\xi\theta(x,\xi_j^\nu)\right)^\beta.
    \end{equation*}
    Estimating \eqref{eq:exprepresent} with \eqref{ineq:higherderivativesphase} (with $\gamma = \gamma_k$ when $|\gamma_k| >0$) and \eqref{ineq:firstderivativephase} (when $\gamma_k=0$) gives the estimate
    \begin{equation}\label{ineq:phase}
    \begin{aligned}
        \left|\partial^\alpha_\xi \left(e^{i2^{j\rho}(\nabla_\xi\theta(x,\xi) - \nabla_\xi\theta(x,\xi_j^\nu))\cdot\xi}\right)\right|
        &\lesssim \sum_{\substack{\beta + \gamma_1+\dots+\gamma_u = \alpha\\
                  |\beta| = u}} \prod_{k=1}^u2^{j(1+|\gamma_k|)\rho/2}2^{j|\gamma_k|(\rho-1)} \\
        &\lesssim \sum_{\substack{\beta + \gamma_1+\dots+\gamma_u = \alpha\\
                  |\beta| = u}} 2^{j(u+\sum_k|\gamma_k|)\rho/2}2^{j(\sum_k|\gamma_k|)(\rho-1)} \lesssim 2^{j|\alpha|\rho/2}.
    \end{aligned}
    \end{equation}
    We can combine \eqref{ineq:psi}, \eqref{ineq:a}, \eqref{ineq:eta} and \eqref{ineq:phase} to prove
    \begin{equation}\label{ineq:firsttry}
        |\partial_\xi^\alpha b_j^\nu(x,\xi)| \lesssim 2^{jm}2^{j|\alpha|\rho/2},
    \end{equation}
    which only proves the lemma in the case $\alpha_1 = 0$.

    To prove the lemma in its full generality, we need to improve estimates \eqref{ineq:eta} and \eqref{ineq:phase}. To this end, we follow the method used in the proof of Lemma~1.9 in \cite{DSFS}. We fix $j$ and $\nu$ and consider a positively homogeneous function $h$ of degree $k$ which is restricted to the cone $\Gamma_j^\nu$. In a coordinate system such that $\xi_j^\nu = (1,0,\dots,0)$, Euler's formula gives
    \begin{equation*}
        \partial_{\xi_1}h(\xi) = \frac{k h(\xi)}{|\xi|} - \left(\frac{\xi}{|\xi|} - \xi^\nu_j\right)\cdot \nabla h(\xi).
    \end{equation*}
    Consequently, it follows that if $|\partial_{\xi}^{\alpha}h(\xi)| \lesssim c_02^{j|\alpha|\rho/2}$ for each multi-index $\alpha$ and $|\xi| \approx 2^{jn(1-\rho)}$, then
    \begin{align*}
        |\partial_{\xi_1}h(\xi)| &\leq \left|\frac{k h(\xi)}{|\xi|}\right| + \left|\left(\frac{\xi}{|\xi|} - \xi^\nu_j\right)\cdot \nabla h(\xi)\right| \\
        &\lesssim c_0k2^{-jn(1-\rho)} + c_02^{-\rho/2}2^{j\rho/2} \lesssim c_0.
    \end{align*}
    We now write a multi-index $\alpha = (\alpha_1,\alpha')$, where $\alpha_1$ is the first component of $\alpha$. Then applying this estimate $\alpha_1$ times to $h = \partial^{\alpha'}_\xi \eta^\nu_j$ and $h = \partial^{\alpha'}_\xi \left(e^{i2^{j\rho}(\nabla_\xi\theta(x,\xi) - \nabla_\xi\theta(x,\xi_j^\nu))\cdot\xi}\right)$, respectively, gives (via \eqref{ineq:eta} and \eqref{ineq:phase} with $\alpha$ replaced by $\alpha'$ and $c_0=2^{j|\alpha'|\rho/2}$)
    \begin{equation*}
        |\partial^\alpha_\xi \eta^\nu_j(\xi)| \lesssim 2^{j|\alpha'|\rho/2} \quad \mbox{and} \quad \left|\partial^\alpha_\xi \left(e^{i2^{j\rho}(\nabla_\xi\theta(x,\xi) - \nabla_\xi\theta(x,\xi_j^\nu))\cdot\xi}\right)\right| \lesssim 2^{j|\alpha'|\rho/2}.
    \end{equation*}
    These improved estimates together with \eqref{ineq:psi} and \eqref{ineq:a} prove the lemma.
\end{proof}

Now, recalling \eqref{eq:sumbnuj}, we can estimate
\begin{equation}\label{ineq:IandII}
\begin{aligned}
        T_j^Bf(x)
        &= \int_{|z| \leq 1 + 2\|\nabla_\xi\theta\|_{L^\infty}} K_j(x,z)f(x-z) dz \\
        &= \int_{|z| \leq 1 + 2\|\nabla_\xi\theta\|_{L^\infty}} \sum_\nu K_j^\nu(x,z)f(x-z) dz
    \end{aligned}
\end{equation}
and denoting
\begin{equation*}
    g^\nu_j(z) = 1 +2^{2j\rho}|z_1|^2 + 2^{j\rho}|z'|^2,
\end{equation*}
where $z = (z_1,z')$ is the coordinate representation of $z$ in the coordinate system where $\xi^\nu_j = (1,0,\dots,0)$ we can further estimate \eqref{ineq:IandII} by
\begin{equation}\label{ineq:IandII_two}
\begin{aligned}
    &\int_{|z| \leq 1 + 2\|\nabla_\xi\theta\|_{L^\infty}} \sum_\nu \left(g^\nu_j(\nabla_\xi\theta(x,\xi_j^\nu) + z)^NK_j^\nu(x,z)\left(\frac{f(x-z)}{g^\nu_j(\nabla_\xi\theta(x,\xi_j^\nu) + z)^N}\right)\right) dz \\
    &\leq \left(\sum_\nu \int_{|z| \leq 1 + 2\|\nabla_\xi\theta\|_{L^\infty}} \left|g^\nu_j(\nabla_\xi\theta(x,\xi_j^\nu) + z)^NK_j^\nu(x,z)\right|^{r'} dz\right)^{1/r'} \\
    &\quad\times \left(\int_{|z| \leq 1 + 2\|\nabla_\xi\theta\|_{L^\infty}} |f(x-z)|^r \sum_\nu \frac{1}{g^\nu_j(\nabla_\xi\theta(x,\xi_j^\nu) + z)^{rN}} dz\right)^{1/r},
\end{aligned}
\end{equation}
for $r \in (1,2]$. To further estimate \eqref{ineq:IandII_two}, we make use of the Hausdorff--Young inequality, \eqref{eq:Knuj}, Lemma~\ref{lem:bnuj}, and the support properties of $b^\nu_j$, to see that
\begin{align*}
    &\int_{|z| \leq 1 + 2\|\nabla_\xi\theta\|_{L^\infty}} \left|g^\nu_j(\nabla_\xi\theta(x,\xi_j^\nu) + z)^NK_j^\nu(x,z)\right|^{r'} dz \\
    &\leq \left( \frac{2^{jn\rho}}{(2\pi)^n}\int_{\R^n} \left|\left((1 + \partial_{\xi_1}^2 + 2^{-j\rho}\Delta_{\xi'})^N b_j^\nu\right)(x,\xi)\right|^r d\xi\right)^{r'/r} \\
    &\lesssim 2^{j(n\rho + mr + n(1-\rho) - (n-1)\rho/2)r'/r}
\end{align*}
so, the first factor on the right-hand side of \eqref{ineq:IandII_two} is
\begin{equation}
\begin{aligned}\label{ineqIagain}
    &\left(\sum_\nu \int_{|z| \leq 1 + 2\|\nabla_\xi\theta\|_{L^\infty}} \left|g^\nu_j(\nabla_\xi\theta(x,\xi_j^\nu) + z)^NK_j^\nu(x,z)\right|^{r'} dz\right)^{1/r'} \\
    &\leq \left(\sum_\nu 2^{j(n\rho + mr + n(1-\rho) - (n-1)\rho/2)r'/r} \right)^{1/r'} \\
    &\lesssim 2^{j(m + n(1-\rho)/r + (n-1)\rho/2 + \rho/r)},
\end{aligned}
\end{equation}
since there are $O(2^{j(n-1)\rho/2})$ terms in the sum in $\nu$.

In the case $r=1$, we get via a similar calculation that
\begin{equation} \label{ineq:rone}
\begin{aligned}
    &\sup_{|z| \leq 1 + 2\|\nabla_\xi\theta\|_{L^\infty}} \left|g^\nu_j(\nabla_\xi\theta(x,\xi_j^\nu) + z)^NK_j^\nu(x,z)\right| \\
    &\leq \frac{2^{jn\rho}}{(2\pi)^n}\int_{\R^n} \left|\left((1 + \partial_{\xi_1}^2 + 2^{-j\rho}\Delta_{\xi'})^N b_j^\nu\right)(x,\xi)\right|^r d\xi \\
    &\lesssim 2^{j(m + n(1-\rho) + (n+1)\rho/2)}
\end{aligned}
\end{equation}
so \eqref{ineq:IandII} gives
\begin{equation}\label{ineq:IIagain}
\begin{aligned}
    |T_j^Bf(x)|
        &= \left|\int_{|z| \leq 1 + 2\|\nabla_\xi\theta\|_{L^\infty}} \sum_\nu K_j^\nu(x,z)f(x-z) dz\right| \\
        &\lesssim 2^{j(m + n(1-\rho) + (n+1)\rho/2)}\int_{|z| \leq 1 + 2\|\nabla_\xi\theta\|_{L^\infty}} |f(x-z)|\sum_\nu \frac{1}{g^\nu_j(\nabla_\xi\theta(x,\xi_j^\nu) + z)^N} dz
\end{aligned}
\end{equation}

To estimate both \eqref{ineq:IIagain} and the second factor on the right-hand side of \eqref{ineq:IandII_two}, we first note that differentiating \eqref{eq:euler} and the assumption $|\det_{n-1}(\partial^2_{\xi\xi}\theta(x,\xi))| > c$ show that $\ker(\partial^2_{\xi\xi}\theta(x,\xi)) = \spann\{\xi\}$. We now fix a $\overline{\xi}$ with $|\overline{\xi}| = 1$ and a coordinate system $z = (z_1,z')$ such that $\overline{\xi} = (1,0,\dots,0)$. We define the conical neighbourhood of $\overline{\xi}$ of radius $t>0$ as
\begin{equation*}
    U_t(\overline{\xi}) := \left\{\xi\in\R^n\setminus\{0\} \colon \left|\frac{\xi}{|\xi|} - \overline{\xi}\right| < t \right\}.
\end{equation*}
The smoothness of $\theta(x,\xi)$ in the $\xi$-variable~--- which is uniform in $x$ and for $\xi$ in the annulus $\{\xi \colon \frac{1}{2} \leq |\xi| \leq 2\}$~--- implies that there exists a sufficiently small choice of $r > 0$ such that
\begin{equation}\label{ineq:nondegen}
    \left|\det\left(\frac{\partial^2\theta}{\partial_{\xi'}\partial_{\xi'}}(x,(1,\xi'))\right)\right| > c/2
\end{equation}
for $|\xi'| < r$, where we have written $\xi = (\xi_1,\xi')$ via the same coordinate system. For each unit vector $\xi \in U_t(\overline{\xi})$ we can find an $\tilde{\xi} \in \R^{n-1}$ such that
\begin{equation*}
    \xi = \frac{(1,\tilde{\xi})}{\sqrt{1 + |\tilde{\xi}|^2}},
\end{equation*}
so $(1,\tilde{\xi})$ is a rescaling of the vector $\xi$ whose tip touches the plane $\{(z_1,z') \in \R^n \colon z_1=1\}$. The homogeneity of $\theta$ and Taylor's formula say that, for the unit vectors $\xi^\nu_j, \xi \in U_t(\overline{\xi})$, we have
\begin{equation*}
\begin{aligned}
    \nabla_{\xi'}\theta(x,\xi) - \nabla_{\xi'}\theta(x,\xi^{\nu}_j)
    &= \nabla_{\xi'}\theta(x,(1,\tilde{\xi})) - \nabla_{\xi'}\theta(x,(1,\tilde{\xi}^\nu_j)) \\
    &= \sum_{k=2}^n\nabla_{\xi'}\left(\frac{\partial\theta}{\partial_{\xi_k}}\right)(x,(1,\tilde{\xi}^{\nu}_j))(\tilde{\xi} - \tilde{\xi}^{\nu}_j)_k + O\left(\left|\tilde{\xi}-\tilde{\xi}^{\nu}_j\right|^2\right)
\end{aligned}
\end{equation*}
where $\nabla_{\xi'} = (\partial_{\xi_2},\dots,\partial_{\xi_n})$ is the gradient in the last $n-1$ components of $\xi$. This implies that
\begin{equation*}
    \left| \nabla_ {\xi'}\theta(x,\xi) - \nabla_{\xi'}\theta(x,\xi^{\nu}_j)\right|
    \lesssim \left|\tilde{\xi}-\tilde{\xi}^{\nu}_j\right|
    \leq \left| \xi - \xi^{\nu}_j\right|
\end{equation*}
and together with \eqref{ineq:nondegen}, implies that, for $\xi^\nu_j, \xi^{\nu'}_j \in U_t(\overline{\xi})$, we have
\begin{equation}\label{ineq:separation}
    \left| \nabla_ {\xi'}\theta(x,\xi^{\nu'}_j) - \nabla_{\xi'}\theta(x,\xi^{\nu}_j)\right|
    \gtrsim \left|\tilde{\xi}^{\nu'}_j-\tilde{\xi}^{\nu}_j\right|
    \gtrsim \left| \xi^{\nu'}_j - \xi^{\nu}_j\right|,
\end{equation}
where the last inequality follows because $\xi^\nu_j, \xi^{\nu'}_j \in U_t(\overline{\xi}) \cap \{\xi \colon |\xi| =1\}$ and $t$ is small. It follows from these last two inequalities and properties \ref{xione} and \ref{xitwo} of $\xi^\nu_j$ that $\{\nabla_ {\xi'}\theta(x,\xi^{\nu}_j)\}_{\xi^\nu_j\in U_{\overline{\xi}}}$ are uniformly distributed vectors in the hyperplane $\{(z_1,z') \in \R^n \colon z_1=1\}$, with a density at most $O(2^{-j\rho (n-1)/2})$. Consequently, provided $N\geq n$ we have that
\begin{equation*}
    \sum_{\xi^\nu_j\in U_{\overline{\xi}}} \frac{1}{g^\nu_j(\nabla_{\xi'}\theta(x,\xi_j^\nu) + z)^{rN}}
    \lesssim \sum_{\xi^\nu_j\in U_{\overline{\xi}}} \frac{1}{(1+2^{j\rho}|\nabla_{\xi'}\theta(x,\xi_j^\nu) + z'|^2)^{rN}} \lesssim 1,
\end{equation*}
where the sums are taken over $\nu$ such that $\xi^\nu_j\in U_t(\overline{\xi})$ for a fixed $j$ and $\overline{\xi}$, with implicit constants which are uniform in $t \leq 1$, $j$ and $\overline{\xi}$. Since we can form the finite cover $\{U_t(\overline{\xi})\}_{\overline{\xi}}$, we have that, for a fixed $j$,
\begin{equation*}
    \sum_{\nu} \frac{1}{g^\nu_j(\nabla_\xi\theta(x,\xi_j^\nu) + z)^{rN}} dz \lesssim 1,
\end{equation*}
where the sum is over all $\nu$. This yields via \eqref{ineq:IandII}, \eqref{ineq:IandII_two}, \eqref{ineqIagain} and \eqref{ineq:IIagain}, that
\begin{equation}\label{ineq:lochigh}
    |T_j^Bf(x)| \lesssim 2^{j(m + n(1-\rho)/r + (n-1)\rho/2 + \rho/r)}M_r(f)(x)
\end{equation}
and, taking $Q$ to be a cube of side length $2^{-k}$ where $k= \lceil -\ln(3(1 + 2\|\nabla\theta\|_{L^\infty}))/\ln 2 \rceil$,
\begin{equation}\label{ineq:LrtoLinfty}
\begin{aligned}
    |T^B_j(f\chi_{\frac{1}{3}Q})(x)| &= 2^{j(m + n(1-\rho)/r + (n-1)\rho/2 + \rho/r)} \left(\int_{|z| \leq 1 + 2\|\nabla\theta\|_{L^\infty}} |f(x-z)|^r\chi_{\frac{1}{3}Q}(x-z) dz\right)^{\frac{1}{r}} \\
    &\lesssim 2^{j(m + n(1-\rho)/r + (n-1)\rho/2 + \rho/r)} \left\langle f \right\rangle_{r,Q}\chi_{Q}(x).
\end{aligned}
\end{equation}
Thus, applying Proposition~\ref{intro:pwsparse} with $\iota(j)= \lceil -\ln(3(1 + 2\|\nabla\theta\|_{L^\infty}))/\ln 2 \rceil$ being a constant function and $T=\sum_j T^B_j$, we obtain a pointwise sparse bound \eqref{def:pwsparse} for $m < -n(1-\rho)/r - (n-1)\rho/2 - \rho/r$ and $r \in [1,2]$.

\subsection{Conclusion of the proofs}\label{sec:conlusion}

Here we conclude the proofs of both Theorem~\ref{thm:pointwise} and Theorem~\ref{thm:sparseform}.

\begin{proof}[Proof of Theorem~\ref{thm:pointwise}]
We can easily obtain the pointwise bound by the maximal function from our previous work. Indeed, combining \eqref{eq:decompose}, \eqref{eq:aandb}, \eqref{ineq:lowfreq}, \eqref{ineq:nonlochigh} and \eqref{ineq:lochigh} gives
\begin{align*}
    |T^\varphi_a(f)(x)| &\leq |T_0(f)(x)| + \sum_{j=1}^\infty |T_j(f)(x)| \\
    &\leq |T_0(f)(x)| + \sum_{j=1}^\infty |T^A_j(f)(x)| + \sum_{j=1}^\infty |T^B_j(f)(x)| \\
    &\lesssim M(f)(x) + \sum_{j=1}^\infty 2^{j(m+n-N\rho)}M(f)(x) + \sum_{j=1}^\infty 2^{j(m + n(1-\rho)/r + (n-1)\rho/2 + \rho/r)}M_r(f)(x) \\
    &\lesssim M_r(f)(x)
\end{align*}
when $N$ is chosen sufficiently large and $m < - (n-1)\rho/2 - \rho/r - n(1-\rho)/r$.

We have already obtained pointwise sparse bounds for $T_0$ in Subsection~\ref{sec:lowfreq}, for $T_j^A$ in Subsection~\ref{sec:spatialnonloc} and $T_j^B$ in Subsection~\ref{sec:spatiallylocalised}. Therefore, pointwise sparse bounds for $T^\varphi_a$ follow from \eqref{eq:decompose} and \eqref{eq:aandb} since the sum of a finite number of sparse bounds is itself a sparse bound (see Theorem~1.3 of \cite{Hanninen}).
\end{proof}

\begin{proof}[Proof of Theorem~\ref{thm:sparseform}]
We first note that the proof of Theorem~\ref{thm:pointwise} above has already produced pointwise sparse bounds for $T_0$ and $\sum_{j=1}^\infty T^A_j$ with exponent $r=1$. Sparse form bounds with exponents $r \geq 1$ and $s'\geq 1$, and $\rho \in (0,1]$, follow easily from these pointwise sparse bounds. Alternatively, the same result can be obtained as a consequence of Proposition~\ref{intro:sparseform} and the corresponding $L^1 \to L^\infty$ bounds used in the proof of Theorem~\ref{thm:pointwise}. A third alternative is via the pointwise bounds by the Hardy-Littlewood maximal function and its sparse bounds (see, for example, Section~2.1 in \cite{DUARTE2024125605}). In any case, since a finite sum of sparse bounds is itself a sparse bound (again, see Theorem~1.3 of \cite{Hanninen}), it only remains to prove sparse form bounds for $\sum_{j=1}^\infty T^B_j$. To do this, we need the necessary $L^r \to L^s$ bounds to apply Proposition~\ref{intro:sparseform}.

Fix $m$, $m_1$, $m_2$ and $\rho$. For $a\in L^\infty S^m_\rho$, we define
\begin{equation*}
    a_w(x,\xi) = a(x,\xi)(1+|\xi|^2)^{w}
\end{equation*}
where $w$ is a complex variable in the strip $\{w \in \C \colon m_1 - m \leq \Re(w) \leq m_2 - m\}$ between the numbers with real values $m_1-m$ and $m_2-m$, and
\begin{equation*}
    K_j^w(x,z) = \frac{\chi(z)}{(2\pi)^n}\int_{\R^n} e^{i\theta(x,\xi) + iz\cdot\xi}\psi_j(\xi)a_w(x,\xi)d\xi,
\end{equation*}
where $\chi$ is the characteristic function of the set $\Omega = \{z\in\R^n \colon |z| \leq 1 + 2\|\nabla_\xi\theta\|_{L^\infty}\}$. It follows that, for any finite number of multi-indices $\alpha$ and $w\in\Omega$,
\begin{equation*}
    |\partial_\xi^\alpha a_w(x,\xi)| \lesssim (1+|\Im(w)|)^{C_0}(1+|\xi|^2)^{\Re(w) + m - \rho|\alpha|}
\end{equation*}
for some $C_0 > 0$. Setting
\begin{equation*}
    T_w(f)(x) = \int K_j^w(x,z) f(x-z) dz,
\end{equation*}
we see that $T_w$ is an analytic family of operators in the sense of Stein \& Weiss (see, for example, Section~1.3.3 in \cite{grafakos2014classical}). We will now apply Stein's interpolation theorem on analytic families of operators to obtain the desired $L^r \to L^s$ bounds.

We begin in the region $2 \leq r \leq s$. This is the triangular region depicted in Figure~\ref{fig1} with corners $(0,1)$, $(\frac{1}{2},1)$ and $(\frac{1}{2},\frac{1}{2})$. Given a point $(\frac{1}{r},\frac{1}{s'})$ in this region, we can interpolate between a point on the line between $(0,1)$ and $(\frac{1}{2},\frac{1}{2})$, and the point $(\frac{1}{2},1)$. Indeed, if we set
\begin{equation*}
    p = 2s\left(\frac{1}{2} + \frac{1}{s} - \frac{1}{r}\right)
\end{equation*}
it is straight-forward to verify that a straight line from $(\frac{1}{p},1-\frac{1}{p})$ to $(\frac{1}{2},1)$ intersects the point $(\frac{1}{r},\frac{1}{s'})$. We set
\begin{equation*}
    m = m_\rho(r,s) - 2\varepsilon, \quad m_1 = m_\rho(p,p) - 2\varepsilon \quad \mbox{and} \quad m_2 = m_\rho(2,\infty) - 2\varepsilon,
\end{equation*}
for some $\varepsilon > 0$ and fix $a\in L^\infty S^m_\rho$ and $\varphi \in L^\infty \Phi^1$ satisfying \eqref{def:phaseZhuMa}. The required $L^2 \to L^\infty$ boundedness at $(\frac{1}{2},1)$ follows from \eqref{ineq:LrtoLinfty} with $r=2$ and gives us that
\begin{equation*}
    \|T_w\|_{L^2 \to L^\infty} \lesssim 2^{-2j \varepsilon}(1+|\Im(w)|)^{C_0}
\end{equation*}
when $\Re(w) = m_2 - m$. When $\Re(w) = m_1-m$, $\psi_j(\xi)a_w(x,\xi)$ satisfies the hypotheses of Theorem~\ref{thm:YangWu} and we can even take $C_\alpha$ in Definition~\ref{def:amplitude} of size $O(2^{-\varepsilon j}(1+|\Im(w)|)^{C_0})$. The $L^p$-boundedness of $T_j$ as defined in \eqref{eq:Tj} with $a(x,\xi) = \psi_j(\xi)a_w(x,\xi)$ follows from Theorem~\ref{thm:YangWu} with an operator norm $C = O(2^{-\varepsilon j}(1+|\Im(w)|)^{C_0})$. Since the $L^p$-boundedness of $T^A_j$ defined as in \eqref{eq:aandb} with $a(x,\xi) = \psi_j(\xi)a_w(x,\xi)$ follows from the sparse bounds in Subsection~\ref{sec:spatialnonloc} and we even see from there that the operator norm is no more than $O(2^{-\varepsilon j}(1+|\Im(w)|)^{C_0})$, we know that $T_w$ also satisfies \begin{equation}\label{ineq:interpolate}
    \|T_w\|_{L^p \to L^p} \lesssim 2^{-2j \varepsilon}(1+|\Im(w)|)^{C_0}
\end{equation}
when $\Re(w) = m_1-m$. We can then apply Theorem~1.3.7 in \cite{grafakos2014classical} to obtain that
\begin{equation}\label{ineq:endresult}
    \|T_w\|_{L^r \to L^s} \lesssim 2^{-j \varepsilon}
\end{equation}
when $w=0$.

Moving on to the region $s' \leq r \leq 2$, this corresponds to the triangular region in Figure~\ref{fig1} with corners $(\frac{1}{2},1)$, $(1,1)$ and $(\frac{1}{2},\frac{1}{2})$. In this region, given a point $(\frac{1}{r},\frac{1}{s'})$, we can interpolate between a point $(\frac{1}{p},1)$ on the line between $(\frac{1}{2},1)$ and $(1,1)$, and the point $(\frac{1}{2},\frac{1}{2})$. This time we set
\begin{equation*}
    m = m_\rho(r,s) - 2\varepsilon, \quad m_1 = m_\rho(2,2) - 2\varepsilon \quad \mbox{and} \quad m_2 = m_\rho(p,\infty) - 2\varepsilon,
\end{equation*}
where
\begin{equation*}
    p = \frac{2\left(\frac{1}{2}-\frac{1}{s}\right)}{\left(\frac{1}{r}-\frac{1}{s}\right)}.
\end{equation*}
The required $L^2\to L^2$ boundedness follows via the same argument that gave \eqref{ineq:interpolate}, but with $p=2$ and the required $L^p \to L^\infty$ boundedness follows from \eqref{ineq:LrtoLinfty}, but with $r=p$. Theorem~1.3.7 in \cite{grafakos2014classical} can then be applied to obtain \eqref{ineq:endresult} for $s' \leq r \leq 2$.

Finally, the region $r \leq s \leq r'$ can be dealt with in a similar manner. For example, given a point $(\frac{1}{r},\frac{1}{s'})$ in this region, we can interpolate between a point $(\frac{1}{p},\frac{1}{p})$ on the line between $(\frac{1}{2},\frac{1}{2})$ and $(1,1)$, where the required $L^p \to L^{p'}$ boundedness was obtained in the argument for the region $s' \leq r \leq 2$, and the point $(1,0)$, where the required $L^1 \to L^1$ boundedness can be obtained analogously to \eqref{ineq:interpolate}, but where Theorem~\ref{thm:Sindayigaya} replaces Theorem~\ref{thm:YangWu}. So we can apply Theorem~1.3.7 in \cite{grafakos2014classical} one last time to obtain \eqref{ineq:endresult} when $r \leq s \leq r'$.

Since $T_w = T^B_j$ when $w=0$, we now have the $L^r \to L^s$ bounds we need to apply Proposition~\ref{intro:sparseform} and get sparse form bounds for $\sum_{j=1}^\infty T^B_j$ we needed to complete the proof of the theorem.
\end{proof}


\subsubsection*{Acknowledgements} We would like to thank the Swedish International Development Cooperation Agency for funding this work.


\printbibliography

\Addresses

\end{document}